
%

\documentclass[10pt]{amsart}
\usepackage{fullpage}   

\usepackage{amsmath}
\usepackage{amsfonts}
\usepackage{amssymb}
\usepackage{amsthm}
\usepackage{color}
\usepackage{hyperref}



\newtheorem{theorem}{Theorem}
\newtheorem{proposition}{Proposition}
\newtheorem{lemma}{Lemma}
\newtheorem{corollary}{Corollary}
\newtheorem{observation}{Observation}

\theoremstyle{definition}
\newtheorem{definition}{Definition}

\theoremstyle{remark}

\newcommand{\E}{\mathcal{E}}

\newcommand{\R}{\mathcal{R}}
\newcommand{\C}{\mathcal{C}}
\newcommand{\A}{\mathcal{A}}

\newcommand{\BR}{\mathbb{R}}

\newcommand{\BZ}{\mathbb{Z}}
\newcommand{\BF}{\mathbb{F}}

\newcommand{\si}{\sigma}


\def\Per{\mathrm{per}}
\def\sign{\mathrm{sign}}
\def\rot{\mathrm{rot}}

\begin{document}



\title[]{Binary linear codes via 4D discrete Ihara-Selberg function}

\author{Martin Loebl}
\address{Department of Applied Mathematics, Charles University, Malostransk\'{e} nam. 25, 118 00 Praha 1, Czech Republic.}
\email{loebl@kam.mff.cuni.cz}

\thanks{Supported by the Czech Science Foundation under the contract number P202-13-21988S}

\keywords{graph polynomial; hyper-matrix; hyper-determinant;  Bass' theorem; Ising partition function; binary linear code; weight enumerator; discrete Ihara-Selberg function}
\date{\today}

\begin{abstract}  
We express the weight enumerator of each binary linear code, in particular the Ising partition function of an arbitrary finite graph, as a formal infinite product.
An analogous result was obtained by Feynman and Sherman in the beginning of the 1960's for the special case of the Ising partition function of the planar graphs. 
A product expression is an important step towards understanding the logarithm of the Ising partition function, for general graphs and in particular for the cubic 3D lattices.
\end{abstract}

\maketitle


\section{Introduction}
\label{sec.intro}

A linear code $\C$ of length $n$ over the binary field $\BF_2$ is a linear subspace of $\BF_2^n$. Each vector in $C$ is called a {\em codeword}. Let $w:\{1,\ldots, n\}\rightarrow \BR$ be a real weight function. The weight of a codeword $c$, denoted by $w(c)$, is defined as the sum of the weights of the non-zero entries of $c$. The weight enumerator of $(\C, w)$ is defined as 
$$
W_{\C, w}(x)= \sum_{c\in \C}x^{w(c)}.
$$
A k-uniform hypergraph (k-hypergraph for short) is a pair $H=(V,E)$ where $E$ is a set of k-element subsets of $V$ called {\em hyperedges}.
If a set is given along with its linear order, we say that the set is {\em directed}. A {\em directed} k-hypergraph is a pair $D=(V,A)$ where $A$ is a set of directed k-element-subsets of $V$. We note that one k-element-subset may appear several times in $A$, with different linear orders.
As an illustration we observe that directed 2-hypergraphs are exactly directed graphs. 
In this paper we show that 
\medskip\noindent
\newline
{\em The weight enumerator of each binary linear code can be expressed as a formal product in the form of the 4-dimensional discrete Ihara-Selberg function of a weighted directed 4-hypergraph (this novel concept is defined in Definition \ref{def.4disf}).}
\medskip\noindent
\newline
 The main theorem is as follows (see subsection \ref{sub.pm} for the proof).

\begin{theorem}
\label{thm.main}
Let $\C$ be a binary linear code of length $n$ over the binary field $\BF_2$, let $w:\{1,\ldots, n\}\rightarrow \BR$ be a real weight function and let $W_{\C, w}(x)$ be the weight enumerator of $(\C, w)$.
Then one can construct in polynomial time a directed 4-hypergraph $D= (V,A)$ and weight function $w': A\rightarrow \BR$, so that $W_{\C, w}(x)$ is equal, up to the sign, to the 4-dimensional discrete Ihara-Selberg function of $(D, w')$.
\end{theorem}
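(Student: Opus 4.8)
The plan is to pass from the code to an Ising-type partition function, then from that partition function to a hyper-matrix whose hyperdeterminant admits a product expansion of Ihara--Selberg type; combining the two gives $W_{\C,w}(x)$ up to a global sign.

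First I would rewrite $W_{\C,w}(x)$ as the partition function of a \emph{local} model. Starting from a parity-check matrix $H$ of $\C$, the Fourier identity $\frac{1}{2^{n-k}}\sum_{s}(-1)^{s\cdot Hc}=[Hc=0]$ turns the membership constraint $c\in\C$ into a sum over auxiliary spins, exhibiting $W_{\C,w}(x)$ as a sum over configurations subject to parity constraints, one per row of $H$. This is exactly the high-temperature, even-subgraph form of an Ising partition function. Parity checks that involve many coordinates would then be decomposed, by introducing auxiliary variables, into checks of bounded size; this is the point at which the constraints turn into the $4$-element directed hyperedges of $D$ and where the claimed polynomial-time construction of $(D,w')$ is carried out. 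For the distinguished case singled out in the abstract, the Ising partition function of a finite graph, this first step is nothing but the classical high-temperature expansion, with edge weights of the form $\tanh(\beta J_e)$ and the codewords being the even subgraphs.

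Next I would encode the resulting configuration sum as a hyperdeterminant. The configurations are organized by the closed non-backtracking walks in $D$, and I would assemble the $4$-index transfer tensor recording which directed hyperedge may legally follow which; this is the hyper-matrix analogue of the Kac--Ward/Sherman edge-adjacency matrix. The target identity is the hyperdeterminant version of Bass's theorem: the hyperdeterminant of $I$ minus this transfer tensor should factor as the formal product, over equivalence classes $[p]$ of primitive (aperiodic) closed walks, of the factors $(1-\sign(p)\,w'(p))$, which by Definition \ref{def.4disf} is precisely the $4$-dimensional discrete Ihara--Selberg function of $(D,w')$. Matching this product against the Ising-type sum obtained in the first step then yields the theorem.

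The hard part will be the control of signs, and this is exactly what forces the passage to dimension $4$. In the planar Feynman--Sherman setting the cross-terms of the product cancel because the self-crossing parity of a generic closed curve is governed by its rotation number via Whitney's theorem, so that each primitive walk carries the clean sign $(-1)^{\rot(p)}$; for a general, non-planar code this two-dimensional argument is simply unavailable. The purpose of the $4$-hypergraph is to embed the walks in a space in which generic curves are crossing-free, so that the crossing and orientation data can be stored consistently in the $4$-index tensor and the formal product reproduces exactly the parity constraints defining $\C$. The crux of the proof is therefore to verify that, with the rotation-number signs built into $w'$ and the hyperdeterminant expansion in place, every contribution coming from a non-codeword cancels in the product, while every codeword $c$ survives with weight $x^{w(c)}$, up to the single overall sign permitted by the statement.
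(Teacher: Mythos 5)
Your proposal correctly identifies the top-level architecture of the argument (realize $W_{\C,w}$ as a hyperdeterminant of $I$ minus an adjacency tensor, then apply a $4$-dimensional Bass-type product formula, i.e.\ Theorem \ref{thm.b}), but both of the load-bearing steps are asserted rather than proved, and the mechanisms you propose for them are not the ones that actually work. For the first step, the Fourier/parity-check rewriting plus ``decompose large checks into checks of bounded size'' does not by itself produce a $4$-matrix whose \emph{determinant} is $W_{\C,w}$: you get a signed sum over spin configurations, and nothing in your sketch explains why the signs introduced by the Fourier identity, or by the determinant expansion, cancel correctly. The paper instead goes through a chain of geometric representation theorems: the code is first realized as the cycle space of a configuration of triangles (an almost disjoint $3$-hypergraph, Theorem \ref{thm.r1}), then its weight enumerator as the perfect matching polynomial of a $3$-partite $3$-hypergraph, i.e.\ a $3$-dimensional \emph{permanent} (Theorem \ref{thm.mainper}), and only then is the permanent converted to a $4$-dimensional determinant by a Kasteleyn-type signing (Theorems \ref{thm.maindet} and \ref{thm.k1}). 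That signing exists because the three auxiliary \emph{bipartite graphs} $G_1,G_2,G_3$ extracted from the hypermatrix are planar --- this is where planarity is smuggled back in, and it is a genuinely different sign-control device from the rotation-number/Whitney mechanism you invoke. Indeed, no rotation number appears anywhere in the paper's $4$-dimensional argument; the sign attached to a factor of the product is $(-1)^{m(S,C)}$, a purely combinatorial count of connector cycles, and your heuristic that ``in $4$ dimensions generic curves are crossing-free'' plays no role.

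The second gap is the product formula itself. You posit a factorization over ``equivalence classes of primitive closed non-backtracking walks,'' but for a directed $4$-hypergraph it is not even clear what a closed walk is; defining the right objects (the $4$-circulations of Definition \ref{def.4cc}, built from connectors and subject to a delicate feasibility condition) is one of the main contributions of the paper, and the proof of Theorem \ref{thm.b} then requires decomposing circulations at a vertex into ``stones'' and invoking Sherman's lemma on coin arrangements to cancel the overcounting, vertex by vertex. Your proposal gives no indication of how the hyperdeterminant expansion (which, per Corollary \ref{p.3det1}, is a sum over vertex-disjoint $4$-circuits) would be matched against an infinite formal product, nor why the cross-terms cancel. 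In short: the skeleton is right, but the two claims you label as ``the target identity'' and ``the crux'' are exactly the content of Theorems \ref{thm.main1} and \ref{thm.b}, and the tools you propose for them (high-temperature expansion, rotation numbers) would need to be replaced by the geometric representation/Kasteleyn-signing machinery and the coin-arrangement argument, respectively.
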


In next two subsections we present a motivating example.

\subsection {Motivating Example: Ising partition function of a planar graph}
\label{sub.is}
Let $G= (V,E)$ be a graph; we assume a graph is {\em simple} in this paper, i.e., it has no multiple edges or loops. We say that $E'\subset E$ is
{\em even} if the graph $(V,E')$ has even degree (possibly
zero) at each vertex.  Let $\mathcal{E}(G)$
denote the set of all even edge sets of $G$. The set of the characteristic vectors of the elements of 
$\mathcal{E}(G)$ is equal to the kernel (over $\BF_2$) of the incidence matrix of graph $G$ and thus it forms 
a binary linear code called {\em cycle space} of $G$ (see \cite{L}).

We assume that a variable $x_e$ is associated with each edge
$e$, and define the generating function for even sets,  $\E_G$, 
in $\BZ[(x_e)_{e\in E(G)}]$,  as follows: 
$$\E_G(x)= \sum_{E'\in\mathcal{E}(G) }\,\,\prod_{e\in E'}x_e~.$$

Given edge-weights $w: E\rightarrow \BR$, we obtain the {\em weight enumerator of the cycle space} of $G$ from 
$\E_G(x)$ by substituting $x_e:= y^{w(e)}$ for each edge $e\in E$.

 Knowing the polynomial $\E_G$ is equivalent to knowing the {\em Ising partition
 function} 
 $Z^{\mathrm{Ising}}_G$
 of the graph $G$ \cite{vdW} (see also \cite{L}).

In a suggestion how to prove the formula of  Kac and Ward \cite{KW} for the planar Ising partition function, Feynman (see \cite{S}) proposed that $\E_G(x)$ is equal to a {\em formal product}, which then equals to the square root of the determinant introduced in \cite{KW}. The formula of Feynman was proved by Sherman in \cite{S};
Sherman also studies in \cite{S} the logarithm of the Feynman's product and reproduces results for the free energy of the planar Ising problem.

\subsection{Continuation of the Example: Feynman's formula}
\label{s.fey}
We start with some basic definitions.
Let $D= (V,A)$ be a directed graph. A sequence of arcs $a_1, a_2, \ldots, a_n$ is a {\em rooted closed walk} if for each $i< n$ the terminal vertex of $a_i$ is equal to the initial vertex of $a_{i+1}$, and the terminal vertex of $a_n$ is equal to the initial vertex of $a_1$. A {\em closed walk} is an equivalence class of rooted closed walks where we identify two rooted closed walks 
$W_1, W_2$ if $W_1= a_1, a_2, \ldots, a_n$ and $W_2= a_2, a_3, \ldots, a_n, a_1$.

A closed walk $W$ is {\em reduced} if two orientations of the same edge are never consecutive. A closed walk $W$ is a {\em power} of closed walk $T$ if there is integer $k> 1$ such that $W$ is obtained by concatenating $k$ copies of $T$. We say that a closed walk is
{\em aperiodic} if it is not a power of a smaller closed walk. We denote by $\Delta(D)$ the set of all {\em aperiodic closed walks} of $D$.

Let $\R(D)$ denote the set of {\em equivalence classes of aperiodic reduced closed walks} of $D$:  We identify  pairs of aperiodic reduced closed walks that are reverses of each other. 
If $W\in \R(D)$ then we denote by $A(W)$ the multi-set of the arcs of $W$. We note that $W$ may have more than one copy of any arc of $D$. We note that $\R(D)$ is infinite whenever $D$ has two directed cycles which share a vertex.

\medskip\noindent

Let $G= (V,E)$ be a planar graph properly embedded in the plane and as above we assume that a variable $x_e$ is associated with each edge $e$. Let $D= (V,A)$ be the directed graph obtained 
by replacing each edge $\{u,v\}$ by the two arcs $(u,v)$ and $(v,u)$. If $a= (u,v)$ and $e=\{u,v\}$ then we let $x_a= x_e$.

We denote by $\rot(p)$ the {\em rotation}, sometimes called the {\em Whitney index} of reduced aperiodic closed walk $p$.  We use Feynman's formula only as an example of an expression we aim at, and so we do not include the technical definition of a rotation here;  see \cite{S} or book \cite{L} for a definition and basic facts. One basic fact  is that $(-1)^{\rot(p)}$ is well defined for the elements of $\R(D)$.  Feynman suggested and Sherman \cite{S} proved the following (see \cite{L} for a proof).

\begin{theorem}
\label{thm.fs}
Let $G= (V,E)$ be a planar graph properly embedded in the plane. Then
$$
\E_G(x)= \prod_{W\in \R(D)}[1- (-1)^{\rot(W)}\prod_{a\in A(W)}x_a].
$$
\end{theorem}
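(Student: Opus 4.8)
The plan is to prove Theorem~\ref{thm.fs} by establishing a formal identity between the finite polynomial $\E_G(x)$ and the infinite product over $\R(D)$, proceeding through the logarithm and a combinatorial reorganization of the resulting sum. First I would take the logarithm of the right-hand side and expand each factor $[1 - (-1)^{\rot(W)}\prod_{a\in A(W)}x_a]$ using the power series $\log(1-t) = -\sum_{k\geq 1} t^k/k$. This converts the product into a double sum over equivalence classes $W\in\R(D)$ and powers $k\geq 1$, where the $k$-th power of the monomial attached to $W$ corresponds precisely to traversing the aperiodic reduced closed walk $W$ exactly $k$ times. Since every reduced closed walk is uniquely a power of an aperiodic one, this reorganization should let me reindex the sum over \emph{all} reduced closed walks (not necessarily aperiodic), weighting each by $1/(\text{length or multiplicity})$ and by the sign $(-1)^{\rot}$, with the periodicity and reversal identifications accounted for by the combinatorial factors.

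The next step is to recognize this sign-weighted sum over reduced closed walks as the trace (or sum of traces of powers) of a suitable transfer-type matrix indexed by the arcs $A$ of $D$. The natural candidate is the matrix $\Lambda$ whose entry $\Lambda_{a,b}$ is $x_b$ (or $\sqrt{x_a x_b}$) when arc $a$ can be followed by arc $b$ in a reduced walk, multiplied by a local phase factor $\e{i\alpha/2}$ recording the turning angle between the two arcs, where $\alpha$ is the exterior angle at the shared vertex in the planar embedding. The signed sum $\sum_k \frac{1}{k}\mathrm{tr}(\Lambda^k)$ then equals $-\log\det(I-\Lambda)$, so the whole infinite product collapses to $\det(I-\Lambda)$ up to handling the reversal identification, which accounts for the square root and gives $\E_G(x) = \det(I-\Lambda)^{1/2}$ or an equivalent Pfaffian. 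The local rotation contribution $(-1)^{\rot(W)}$ is exactly the product of these turning-angle phases telescoping around the closed walk, which is where the planarity is essential: the embedding is what makes $\rot$ and hence the phases well defined.

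The final step is to identify $\det(I-\Lambda)$ with the even-subgraph generating function $\E_G(x)$ directly. Here I would expand the determinant and match its terms, after the sign cancellations forced by the turning phases, against the combinatorial expansion of $\E_G(x)$ over even edge sets. The key mechanism is that in the planar setting the signs arrange so that every contribution corresponding to a genuine even subgraph survives with coefficient $+1$, while contributions from configurations with ``backtracking'' or with the wrong parity cancel in pairs; this is the content of the classical Kac--Ward / Sherman analysis.

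I expect the main obstacle to be the sign bookkeeping, that is, proving that $(-1)^{\rot}$ together with the turning-angle phases produces \emph{exactly} the cancellations needed so that $\det(I-\Lambda)$ reduces to $\E_G(x)^2$ (equivalently that the square root is the even-set generating function). This is the heart of the Feynman--Sherman theorem and the step that genuinely uses the topology of the plane: one must control how the winding numbers of closed walks interact under concatenation and reversal, and show that only the contractible, non-self-intersecting configurations contribute. The combinatorial reindexing in the first two steps is essentially formal and follows the standard zeta-function / Ihara-type manipulation, but the geometric sign cancellation is where the real work lies, and in practice one would invoke the detailed argument of~\cite{S} or~\cite{L} rather than reconstruct it from scratch.
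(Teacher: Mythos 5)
First, a point of comparison: the paper does not prove Theorem \ref{thm.fs} at all. It states it as a motivating example, explicitly declines to define $\rot$, and defers the proof to Sherman \cite{S} and the book \cite{L}. So your proposal cannot be checked against an in-paper argument; it has to stand on its own. The route you describe --- take the logarithm of the formal product, regroup rooted reduced closed walks into powers of aperiodic ones, recognize the resulting sum as $-\log\det(I-\Lambda)$ for a Kac--Ward-type transition matrix with half-turning-angle phases, and halve for the reversal identification --- is a legitimate and standard way to show that the square of the product over $\R(D)$ equals $\det(I-\Lambda)$. That part is the formal zeta-function manipulation and is sound, modulo two details you should make explicit: (i) $(-1)^{\rot(W^k)}$ is not $\bigl((-1)^{\rot(W)}\bigr)^k$, so the geometric series only closes up if you work with the product of the local turning phases (which genuinely is multiplicative under concatenation) and invoke Whitney's formula to identify that product with $-(-1)^{\rot(W)}$; (ii) no aperiodic reduced closed walk is its own reverse, which is what makes the passage to the square root clean.

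The genuine gap is in your final step. Identifying $\det(I-\Lambda)$ with $\E_G(x)^2$ is the Kac--Ward identity, and you dispose of it with ``expand the determinant and match its terms \dots\ this is the content of the classical Kac--Ward/Sherman analysis.'' That is precisely the hard topological cancellation; it is essentially of the same depth as the theorem being proved, and the original Kac--Ward argument for it was incomplete --- a rigorous proof for arbitrary planar graphs is a substantial piece of work, not a routine term-matching. As written, your proof therefore reduces the statement to an unproved statement of equal difficulty, and your own closing admission that one would ``invoke the detailed argument of \cite{S} or \cite{L}'' concedes the point. It is also worth noting that the proof in \cite{S} and \cite{L} --- the one this paper models its own 4D argument on --- runs in the opposite direction: it establishes the product formula directly by a combinatorial cancellation argument (the Lemma on coin arrangements, exactly as deployed in Proposition \ref{p.n2}, combined with Whitney's theorem to control the signs of the surviving terms), and the determinantal/Pfaffian form is then obtained afterwards via the Bass-type identity. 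If you want a self-contained proof, that is the route to follow rather than routing through Kac--Ward.
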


The product in Theorem \ref{thm.fs} is a {\em formal product} which we define as follows. If $I$ is a countable set and $f_i, i\in I$ are polynomials, then we let
$$
\prod_{i\in I}[1+f_i]= \sum_{J\subset I\text{ finite }}\prod_{j\in J}f_j.
$$

An example of a formal product directly relevant to the constructions of this paper is Bass' theorem.

\subsection{Bass' theorem}
\label{sub.bass}
If $D= (V,A)$ is a directed graph with no loops then let $\S(D)$ denote the set of aperiodic closed walks of $D$. We assume that a variable $x_a$ is associated with each arc $a$ of $D$ and let $x= (x_a)_{a\in A}$. The {\em adjacency matrix} $A(D,x)$ is the $V\times V$ matrix defined by $A(D,x)_{uv}= x_{(uv)}$ when $(u,v)\in A$, and $A(D,x)_{uv}= 0$ otherwise.

Bass' theorem (see \cite{B} and \cite{FZ} for several proofs and generalizations) expresses the determinant of $I-A(D)$, where $I$ is the identity matrix and 
$A(D)$ is the adjacency matrix of a digraph $D$, as a formal product over aperiodic closed walks in $D$.

\begin{definition}
\label{def.isf}
Let $D= (V,A)$ be a directed graph with no loops and let $x= (x_a)_{a\in A}$ be the vector of variables. The {\em discrete Ihara-Selberg function} $IS_D(x)$ is the following formal product:
$$
IS_D(x)= 
\prod_{W\in \Delta(D)} (1- \prod_{a\in A(W)}x_a).
$$ 
\end{definition}

\begin{theorem}[Bass' theorem]
\label{thm.ba}
Let $D= (V,A)$ be a directed graph with no loops and let $x= (x_a)_{a\in A}$ be the vector of variables. Then
$$
\det(I-A(D,x))= IS_D(x).
$$
\end{theorem}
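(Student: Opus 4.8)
The plan is to prove the identity by taking formal logarithms of both sides and comparing power-series expansions. Both $\det(I - A(D,x))$ and $IS_D(x)$ are elements of the ring of formal power series in the variables $(x_a)_{a \in A}$ with constant term $1$: the determinant is in fact a polynomial, while the formal product is a well-defined power series because only finitely many of its factors have degree below any given bound, so each monomial arises from finitely many terms of the expansion. Since both sides are units with constant term $1$, their formal logarithms exist in $\Q[[(x_a)_{a\in A}]]$, and it suffices to show $\log\det(I - A(D,x)) = \log IS_D(x)$; exponentiating then yields the theorem.

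For the determinant side I would use the matrix identity $\log\det(I - M) = \operatorname{tr}\log(I - M)$, valid here because the entries of $A(D,x)$ are homogeneous of degree one with no constant term, so $A(D,x)^k$ contributes only monomials of degree at least $k$ and the series $\sum_{k\geq 1} A(D,x)^k/k$ converges entrywise in the formal sense. This gives
$$
\log\det(I - A(D,x)) = -\sum_{k\geq 1}\frac{1}{k}\operatorname{tr}\bigl(A(D,x)^k\bigr).
$$
The entry $\bigl(A(D,x)^k\bigr)_{vv}$ equals the sum of $\prod_{i=1}^k x_{a_i}$ over all rooted closed walks $a_1,\ldots,a_k$ of length $k$ based at $v$ (here the no-loop hypothesis simply rules out length-one terms), so $\operatorname{tr}(A(D,x)^k)$ is the generating function of all rooted closed walks of length $k$, each weighted by the product of its arc variables with multiplicity.

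On the product side I would expand each factor as $-\log(1 - \prod_{a\in A(W)}x_a) = \sum_{m\geq 1}\frac{1}{m}\bigl(\prod_{a\in A(W)}x_a\bigr)^m$, so that $\log IS_D(x) = -\sum_{W\in\Delta(D)}\sum_{m\geq 1}\frac{1}{m}\,x(W)^m$, where $x(W) = \prod_{a\in A(W)}x_a$. The two expressions are then matched through the unique factorization of closed walks: every unrooted closed walk is $T^m$ for a unique aperiodic $T\in\Delta(D)$ and a unique $m\geq 1$, and $x(T^m) = x(T)^m$. The one quantitative input is a counting of rooted representatives: an unrooted closed walk whose primitive root $T$ has length $\ell$ has exactly $\ell$ distinct rooted representatives, while its own length is $m\ell$. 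Grouping the rooted walks in $\operatorname{tr}(A(D,x)^k)$ by their underlying unrooted walk, the weight $1/k = 1/(m\ell)$ times the number $\ell$ of representatives collapses to $1/m$, which reconciles the two sums. I expect this bookkeeping to be the main obstacle: the passage between rooted walks and their cyclic classes, together with the verification that the period of $T^m$ is exactly $\ell$ (which is where aperiodicity of $T$ is used), must be carried out carefully so that the factor $\ell$ cancels cleanly against $1/(m\ell)$.
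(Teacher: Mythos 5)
Your proof is correct, but it follows a genuinely different route from the one the paper uses. You take the classical exponential approach: $\log\det(I-A(D,x))=\operatorname{tr}\log(I-A(D,x))=-\sum_{k\ge 1}\tfrac{1}{k}\operatorname{tr}(A(D,x)^k)$, identify $\operatorname{tr}(A(D,x)^k)$ with the generating function of rooted closed walks of length $k$, and then group rooted walks by their primitive root $T$, using the fact that a closed walk equal to $T^m$ with $|T|=\ell$ has exactly $\ell$ rooted representatives so that the factor $\ell$ cancels against $1/(m\ell)$ to give $\sum_{T}\sum_m \tfrac1m x(T)^m=-\sum_T\log(1-x(T))$; your convergence remarks (finitely many aperiodic walks below any given length, so the formal product agrees with the limit of partial products) are the right ones, and the only point you leave implicit is the standard justification of $\log\det=\operatorname{tr}\log$ for formal power-series matrices. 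The paper instead gives a purely combinatorial, sign-cancellation proof in the style of Foata--Zeilberger and Sherman: it expands the formal product as a sum over finite sets of aperiodic closed walks, and shows vertex by vertex that all terms in which some vertex is entered or left more than once cancel, by decomposing each offending walk into ``stones'' at that vertex and invoking Sherman's Lemma on coin arrangements; what survives is exactly the sum over vertex-disjoint directed cycles, i.e.\ the permutation expansion of $\det(I-A(D,x))$. Your argument is shorter and more standard, but it requires denominators (working over $\Q$) and, more importantly, has no analogue for the hyperdeterminants of $4$-matrices, where there is no trace-logarithm identity; the paper's coin-arrangement proof is included precisely because it is the template that is transported verbatim to the $4$-dimensional setting in the proof of Theorem~\ref{thm.b}. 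So as a standalone proof of Theorem~\ref{thm.ba} your approach is fine, but it would not serve the structural role the paper needs this proof to play.
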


We recall that set $\Delta(D)$ is infinite whenever $D$ has two directed cycles which share a vertex.
We denote by $F$ the set of all {\em finite subsets} of $\Delta(D)$. By the definition of a formal product we have 
\begin{equation}
\label{eq.p}
IS_D(x)= \sum_{\{W_1, \ldots, W_k\}\in F} (-1)^{k} \prod_{i=1}^k\prod_{a\in A(W_i)}x_a
\end{equation}

Let $F'$ denote the subset of $F$ 
consisting of all sets of vertex disjoint directed cycles of $D$. The following is a well-known fact:
\begin{equation}
\label{eq.d}
\det(I- A(D,x))= \sum_{\{W_1, \ldots, W_k\}\in F'} (-1)^{k} \prod_{i=1}^k\prod_{a\in A(W_i)}x_a
\end{equation}

Hence one can view Bass' theorem as a strengthening of (\ref{eq.d}).

\subsection{Main contribution}
\label{sub.mma}
For a non-planar graph $G$, the Ising partition function can be written as a linear combination of formal products \cite{L1} (see also \cite{L}), but a single product formulation, crucial for studying the logarithm and the free energy,  has been missing.  The initial idea of this paper is that for the Ising model where the underlying graph is non-planar, we should 
consider underlying structure of higher dimension than that of graphs. 
It turns out that it is advantageous to study the weight enumerators of all binary linear codes, i.e., in particular the Ising partition functions of general graphs, as functions on uniform hypergraphs. The relevant functions are hyperdeterminants and hyperpermanents of
higher-dimensional matrices. Finally, a product formula is obtained by the 4-dimensional analogue of Bass' theorem.

\medskip\noindent

The rest of the paper is
devoted to the proof of Theorem \ref{thm.main}.
In section \ref{sec.geom} we show
that the weight enumerator of each binary linear code can be written as $\det(I+A)$, where $A$ is the incidence matrix of a directed 4-uniform hypergraph, and $I$ is the 4-dimensional identity matrix. This section generalizes the theory of Kasteleyn orientations and uses results of \cite{LR, R1, R2}.  Then, in section \ref{s.3bt}, the 4-dimensional discrete Ihara-Selberg function is defined and the 4-dimensional analogue of Bass' theorem is stated and it is proved in the last section. 
The results of the last three sections prove the main theorem.

\medskip\noindent

{\bf Acknowledgement.} This Project started with a visit of Iain Moffatt in Praha in 2013. I would 
like to thank to him for the enthusiastic discussions during the visit. The idea to formalize 
circulations by defining connectors belongs to Iain. I would also like to thank to Martin Klazar for discussions on formal products.

\section{Geometric representation of binary linear codes}
\label{sec.geom}

 We say that a k-hypergraph $H= (V,E)$ is {\em almost disjoint} if each pair of hyperedges intersects in at most one vertex. We say that $H$ is {\em k-partite}
if the vertex-set $V$ is partitioned into $k$ subsets $V_1, \ldots, V_k$ and each hyperedge of $E$ intersects each $V_i$ in exactly one vertex.  $M\subset E$ is a {\em perfect matching} of $H$ if the elements of $M$ are pairwise disjoint and $\cup M= V$.

\subsection{Hypermatrices, their determinants and permanents}
\label{sub.hyp}
We start by introducing basic notions.
\begin{itemize}
\item
A {\em k-matrix} is an array indexed by all k-tuples from its {\em index set} $V_1\times \ldots\times V_k$. 
\item
Let 
$D= (V,A)$ be a directed k-hypergraph and  let $x= (x_a)_{a\in A}$ be the vector of variables associated with directed hyperedges of $D$. The {\em adjacency matrix} $A(D, x)$ is the k-matrix with index-set $V^k$ defined by: $A(D,x)_{a}= x_a$ for each $a\in A$, and $A(D,x)_{a}= 0$ otherwise. If a weight function $w: A\rightarrow \BR$ is given and z is a variable then we let 
$A(D,w,z)= A(D,x)|_{x_a:= z^{w(a)}, a\in A}$.
\item
Let $H=(V_1, \ldots, V_k, E)$ be a k-partite k-hypergraph and  let $x= (x_e)_{e\in E}$ be the vector of variables associated with hyperedges of $H$. The {\em transition matrix} $T(H, x)$ is the k-matrix with index-set $U= V_1\times\ldots \times V_k$ defined by: 
$T(H,x)_{u}= x_u$ for each $u\in E$, and $T(H,x)_{u}= 0$ otherwise.  If a weight function $w: E\rightarrow \BR$ is given and z is a variable then we let 
$T(H,w,z)= T(H,x)|_{x_e:= z^{w(e)}, e\in E}$.
\end{itemize}

If $M$ is a (k+1)-matrix with index-set $U= V_1\times\ldots \times V_k\times V_{k+1}$ then its {\em determinant} is defined by
$$
\det(M)= \sum_{\alpha_1, \ldots, \alpha_k} \prod_{1\leq j\leq k}sign(\alpha_j) 
\prod_{i\in V_0}M_{(\alpha_1(i),\ldots, \alpha_k(i), i)},
$$
where in the summation each $\alpha_i$ is a permutation of $V_i$.
Analogously, the {\em permanent} is defined by 
$$
\Per(M)= \sum_{\alpha_1, \ldots, \alpha_k}  
\prod_{i\in V_0}M_{(\alpha_1(i),\ldots, \alpha_k(i), i)}.
$$

As an illustration we observe: if $H= (V_1, \ldots, V_k, E)$ is a k-partite k-hypergraph and $x= (x_e)_{e\in E}$ is the vector of variables associated with hyperedges of $H$ then $\Per(T(H,x))$ is the generating function of perfect matchings of $H$, i.e.,
$$
\Per(T(H,x))= \sum_{M\text{ perfect matching}} \prod_{e\in M}x_e.
$$

Finally, we introduce a useful construction how to get a directed k-hypergraph from an almost disjoint k-partite k-hypergraph. This construction generalises a well-known construction of a directed graph from a bipartite graph without parallel edges and with a fixed perfect matching. 

\begin{definition}
\label{def.contr}
Let $H= (V_1, \ldots, V_k, E)$ be an almost disjoint k-partite k-hypergraph and let $P$ be a perfect matching of $H$.  For $e\in E$ and $1\leq i\leq k$ let $e_i$ denote the vertex of $e\cap V_i$, let $P(e)_i$ denote the edge of $P$ which contains vertex $e_i$ and let $a(e,P)_i$ be the vertex of $P(e)_i\cap V_1$.
We let $a(e,P)$ be the directed hyperedge $(a(e,P)_1, \ldots, a(e,P)_k)$. We define directed k-hypergraph 
$D(H,P)= (V_1,A)$ so that $A= \{a(e,P): e\in E\setminus P\}$.
\end{definition}

We note that each $a(e,P)$ in the above definition is an ordered k-tuple of different vertices of $V_1$ since hypergraph $H$ is almost disjoint. Next observation follows directly from the definitions.
 
\begin{observation}
\label{o.contr}
Let $H= (V_1, \ldots, V_k, E)$ be an almost disjoint k-partite k-hypergraph and let $P$ be a perfect matching of $H$. Let $D(H,P)= (V_1, A)$ be the directed k-hypergraph of Definition \ref{def.contr}. Let $x= (x_e)_{e\in E}$ be the vector of variables associated with hyperedges of $H$ and let 
$y= (y_a)_{a\in A}$ be the vector of variables associated with directed hyperedges of $D(H,P)$. 
We let $x_p:= 1$ for each $p\in P$ and $y_{a(e,P)}:= x_e$ for each edge of $e\setminus P$. 
Further we assume set $V_1$ be linearly ordered. This induces linear order on $P$: first is the edge of P which contains the first vertex of $V_1$ and so on.  Linear order on $P$ induces linear order on each $V_i, i> 1$: first is the vertex of $V_i$ that belongs to the first edge of $P$ and so on. Then
$$
T(H,x)= I+A(D(H,P),y).
$$
\end{observation}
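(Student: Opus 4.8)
The plan is to verify the matrix identity entrywise, after first pinning down the identification of index sets that is implicit in the statement. The transition matrix $T(H,x)$ is indexed by $V_1\times\cdots\times V_k$ while the adjacency matrix $A(D(H,P),y)$ is indexed by $V_1^k$, and the bridge between them is the perfect matching $P$. Since $H$ is $k$-partite and $P$ is perfect, each vertex of $V_i$ lies on a unique edge of $P$, and that edge meets $V_1$ in a unique vertex. First I would make this explicit by defining bijections $\phi_i\colon V_i\to V_1$, where $\phi_i(v)$ is the $V_1$-vertex of the edge of $P$ through $v$ (so $\phi_1=\mathrm{id}$), and use $(\phi_1,\ldots,\phi_k)$ to identify $V_1\times\cdots\times V_k$ with $V_1^k$. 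The linear orders prescribed in the statement are exactly those that make this identification order-preserving, and under it the $k$-dimensional identity $I$ becomes the matrix whose nonzero entries sit on the tuples $(\phi_1(u_1),\ldots,\phi_k(u_k))$ with all $u_i$ on a common edge of $P$. The decisive bookkeeping observation is that, for an edge $e\in E$ written as its vertex tuple $(e_1,\ldots,e_k)$, Definition \ref{def.contr} gives precisely $a(e,P)=(\phi_1(e_1),\ldots,\phi_k(e_k))$; that is, $a(e,P)$ is the image of the index $e$ under this identification.

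Next I would split the indices $u=(u_1,\ldots,u_k)$ into three cases. If $u$ corresponds to an edge $p\in P$, then $x_p=1$ gives $T(H,x)_u=1$, while all $\phi_i(p_i)$ equal the single $V_1$-vertex of $p$, so the image of $u$ is a diagonal tuple, contributing $1$ from $I$ and, as shown below, $0$ from the adjacency matrix. If $u$ corresponds to an edge $e\in E\setminus P$, then $T(H,x)_u=x_e$, while the image $a(e,P)$ lies in $A$ with $y_{a(e,P)}=x_e$, and the identity contributes $0$. For every remaining index $u$ we have $T(H,x)_u=0$, and the task is to show the right-hand side also vanishes.

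Two separation facts drive the argument, and I expect these to be essentially its only content. To see that off-diagonal edges $e\in E\setminus P$ never collide with the identity, note that if the vertices $e_1,\ldots,e_k$ all lay on a single edge $p\in P$ then $e$ and $p$ would share $k\ge 2$ vertices, contradicting that $H$ is almost disjoint; hence $a(e,P)$ is never a diagonal tuple, and in particular the adjacency matrix has no nonzero diagonal entry, i.e.\ $D(H,P)$ has no loops. To see that no spurious index receives a nonzero right-hand entry, observe that since each $\phi_i$ is a bijection and a $k$-partite edge is determined by its $k$ vertices, the map $e\mapsto a(e,P)$ is injective; thus an index $u$ with $\{u_1,\ldots,u_k\}\notin E$ maps to a tuple lying outside $A$ and off the $P$-diagonal, yielding $0=0$. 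Combining the three cases gives $T(H,x)=I+A(D(H,P),y)$ entrywise.

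The main obstacle is not any single computation but getting the $P$-mediated identification of the two index sets exactly right, because the whole statement amounts to the assertion that this identification turns the transition matrix into an identity-plus-adjacency decomposition. The almost-disjointness hypothesis is precisely what cleanly separates the $P$-part, which becomes $I$, from the $(E\setminus P)$-part, which becomes $A(D(H,P),y)$; without it, edges could overlap a matching edge in several vertices and the clean split would fail.
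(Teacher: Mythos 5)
Your proof is correct and is exactly the direct entrywise verification the paper has in mind (the paper states this observation without proof, as "following directly from the definitions"): the $P$-mediated bijections $\phi_i\colon V_i\to V_1$, the identification of the $P$-edges with the diagonal, and the two almost-disjointness/injectivity checks are precisely the content needed. Nothing is missing.
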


The main result of this section is the following theorem. Its proof is postponed to the end of the section.

\begin{theorem}
\label{thm.main1}
Let $\C$ be a binary linear code of length $n$ over the binary field $\BF_2$, let $w:\{1,\ldots, n\}\rightarrow \BR$ be a real weight function and let $W_{\C, w}(x)$ be the weight enumerator of $(\C, w)$.
Then one can construct in polynomial time a directed 4-hypergraph $D= (V,A)$ and weight function $t: A\rightarrow \BR$, so that  
$$
W_{\C, w}(z)= \pm\det(I+A').
$$
where $A'$ is a {\em signing} of the adjacency matrix $A(D,t,z)$, i.e., is obtained from $A(D,t,z)$ by multiplying some entries by $-1$.
\end{theorem}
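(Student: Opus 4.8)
The plan is to realize $W_{\C, w}(z)$ as the perfect-matching generating function of an almost disjoint $4$-partite $4$-uniform hypergraph and then to convert that permanent into a signed determinant. I would proceed in three stages: (i) construct from $\C$ a hypergraph $H$ whose perfect matchings are in weight-preserving bijection with the codewords; (ii) record that $W_{\C, w}(z) = \Per(T(H,w,z))$; and (iii) pass from the permanent to the determinant of a suitable signing, using Observation \ref{o.contr} to put the transition matrix into the form $I + A(D,t,z)$.

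For stage (i) I would invoke the geometric representation of binary linear codes developed in \cite{LR, R1, R2}. The aim is to build, in polynomial time, an almost disjoint $4$-partite $4$-uniform hypergraph $H = (V_1,V_2,V_3,V_4,E)$ with a distinguished perfect matching $P$, together with a weight function $w'$ on $E$ transported from $w$, such that the perfect matchings of $H$ biject with the codewords of $\C$ and a matching $M$ corresponding to codeword $c$ satisfies $\sum_{e \in M} w'(e) = w(c)$. This is precisely the point at which dimension $4$ (as opposed to the dimension $2$ that already suffices in the planar Ising case) becomes necessary.

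Granting such an $H$, the generating-function interpretation of the permanent of a transition matrix gives $\Per(T(H,w,z)) = \sum_{M} \prod_{e \in M} z^{w'(e)} = W_{\C, w}(z)$. By Observation \ref{o.contr}, after fixing the linear orders it induces, $T(H,w,z) = I + A(D(H,P),t,z)$, where $D = D(H,P)$ and $t$ is the weight transported from $w'$. It therefore remains only to replace the permanent by a determinant of a signing of $A(D,t,z)$.

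The heart of the argument, and the step I expect to be the main obstacle, is stage (iii): producing a signing $A'$ of $A(D,t,z)$ with $\det(I + A') = \pm \Per(T(H,w,z))$. Comparing the definitions of the determinant and the permanent of a $4$-matrix, every nonzero monomial in either expansion is indexed by a triple of permutations $(\alpha_1,\alpha_2,\alpha_3)$ for which the hyperedges $\{(\alpha_1(i),\alpha_2(i),\alpha_3(i),i) : i \in V_4\}$ form a perfect matching $M$ of $H$, and the two expansions differ only by the factor $\sign(\alpha_1)\sign(\alpha_2)\sign(\alpha_3)$ attached to $M$ in the determinant. Hence I must choose entry signs $\varepsilon_e \in \{\pm 1\}$ so that $\sign(\alpha_1)\sign(\alpha_2)\sign(\alpha_3)\prod_{e \in M}\varepsilon_e$ is the same for every perfect matching $M$; this is the $4$-dimensional analogue of a Kasteleyn (Pfaffian) orientation. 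Proving that such a signing exists for the hypergraphs arising in stage (i), and can be computed in polynomial time, is the crux that generalizes the classical planar theory.
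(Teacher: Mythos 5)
Your high-level architecture coincides with the paper's (permanent of a partite transition matrix, Kasteleyn-type signing, then Observation \ref{o.contr} to reach $I+A'$), but the proposal has a genuine gap exactly where you locate the crux: you never establish that the required signing exists, and the way you have set up stage (i) does not put you in a position to do so. The cited sources \cite{R1,R2,LR} produce an almost disjoint \emph{3-partite 3-uniform} representation (this is Theorem \ref{thm.mainper} in the paper), not a 4-partite one; there is no off-the-shelf 4-dimensional geometric representation to ``invoke.'' More importantly, you treat the construction of $H$ and the existence of the signing as independent problems, whereas they must be solved together: for a generic $4$-partite representation there is no reason a Kasteleyn signing should exist (already for $2$-matrices the Kasteleyn property is severely restricted, cf.\ \cite{RST}), so a plan that first fixes $H$ and only then hunts for signs $\varepsilon_e$ can fail outright.

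The missing idea is the sufficient condition of Theorem \ref{thm.k1} together with a gadget construction tailored to meet it. Theorem \ref{thm.k1} says that a $(k+1)$-matrix is Kasteleyn whenever the $k$ auxiliary bipartite graphs $G_1,\dots,G_k$ (on $V_{k+1}\cup V_i$, with an edge $\{v_{k+1},v_i\}$ whenever some nonzero entry projects to it) are all planar; the signing is then the coordinatewise product of Kasteleyn signings of the $G_i$, and the permutation $\si_k$ can be ``integrated out'' one coordinate at a time. The paper's Theorem \ref{thm.maindet} then lifts the 3-partite 3-hypergraph of Theorem \ref{thm.mainper} to an almost disjoint 4-partite 4-hypergraph by attaching, to every hyperedge and every vertex, a tree-like system of new vertices and weight-zero hyperedges, engineered precisely so that each $G_i$ decomposes into internally disjoint paths and disjoint even cycles and is therefore planar. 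This is where the fourth dimension earns its keep and where the perfect matchings of the new hypergraph are put in weight-preserving bijection with those of the old one. Without this lift and the planarity criterion, your stage (iii) remains an unproved assertion rather than a proof.
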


\subsection{Continuation of the Example: Kasteleyn orientations}
\label{sub.disf}
If $\C$ is the cycle space of a planar graph then the hyperdeterminant of Theorem \ref{thm.main1} can be replaced by a determinant-type expression of a standard matrix, called a {\em Pfaffian}. This seminal construction of Kasteleyn \cite{K} and Fisher \cite{F} from the beginning of the 1960's started the theory of Kasteleyn orientations.  We briefly sketch the two steps of the construction below.
Define the generating function of perfect matchings of a graph $G$, $\P_G$,
as follows:
$$\P_G(x)= \sum_{E'\text{ perfect matching} }\,\,\prod_{e\in E'}x_e~.$$

Let $G= (V,E)$ be a planar graph with edge-variables $x= (x_e)_{e\in E}$.  

{\bf Step 1.} Fisher \cite{F} (see also \cite{L}) constructed a planar graph $H(G)= H$ so that a subset of its edges $E_0\subset E(H)$ is identified with the edge-set of $G$ and moreover: if we let $y_e, e\in E(H)$ be edge-variables of $H$ and let $y_e:= x_e$ for $e\in E_0$ while $y_e:= 1$ otherwise then 
$\E_G(x)= \P_H(y)$.

{\bf Step 2.} Given a planar graph $K$ with edge-variables $z= (z_e)_{e\in E(K)}$, Kasteleyn \cite{K} (see also \cite{L}) constructed an orientation $D_K$ of $K$ called {\em Kasteleyn} or {\em Pfaffian orientation} so that $\P_K(z)$ is equal 
to the Pfaffian of a matrix $M(D_K, z)$ defined by orientation $D_K$. 

Summarizing both steps, the weight enumerator $\E_G(x)$ of the cycle space of a planar graph is expressed as a Pfaffian.

\subsection{Weight enumerators as 3-permanents}
\label{sub.3per}
This subsection contains results of previous papers on geometric representations of binary linear codes \cite{R1} (master thesis of Rytir) and \cite{LR}.

Let $H= (V,E)$ be an almost disjoint k-hypergraph and let $t:E\rightarrow \BR$ be a weight function. We denote by $I(H)$ the $V\times E$ {\em incidence matrix} of $H$ defined as follows:
$I(H)_{v,e}= 1$ if $v\in e$ and $I(H)_{v,e}= 0$ otherwise. Next theorem asserts that the weight enumerator of each binary linear code has a geometric representation by a system of triangles.

\begin{theorem}
\label{thm.r1}
Let $\C$ be a binary linear code of length $n$ over the binary field $\BF_2$, let $w:\{1,\ldots, n\}\rightarrow \BR$ be a real weight function and let $W_{\C, w}(x)$ be the weight enumerator of $(\C, w)$.
Then one can construct in polynomial time an almost disjoint 3-hypergraph $H= (V,E)$ and a weight function $w': E\rightarrow \BR$ so that $W_{\C, w}(x)$ is equal to the weight enumerator
$W_{K, w'}(x)$, where $K= Ker_{\BF_2}(I(H))$ is the kernel (in $\BF_2$) of the incidence matrix $I(H)$.
\end{theorem}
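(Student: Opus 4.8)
The plan is to realise $\C$ as a hypergraph cycle space by encoding a parity-check matrix of $\C$ into the vertex-constraints of a $3$-uniform almost disjoint hypergraph, while using weight-$0$ duplicate coordinates to keep every hyperedge of size exactly three. Fix a parity-check matrix $P\in\BF_2^{m\times n}$ with $\C=Ker_{\BF_2}(P)$, so that $c\in\C$ iff every row (\emph{check}) $R$ of $P$ satisfies $\sum_{i\in R}c_i=0$. The underlying dictionary is the following: since $I(H)$ has rows indexed by $V$ and columns by $E$, a vector $k\in\BF_2^{E}$ lies in $Ker_{\BF_2}(I(H))$ if and only if, for each vertex $v$, the coordinate $(I(H)k)_v=\sum_{e\ni v}k_e$ vanishes; that is, every vertex of degree $d$ imposes one $d$-term parity constraint on the coordinates of its incident hyperedges. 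The key point is that $3$-uniformity restricts only the hyperedges (each has three vertices) and leaves vertex degrees free, so a check of arbitrary weight is realised by a single high-degree vertex; the entire difficulty is to keep each coordinate in exactly three hyperedges and to keep $H$ almost disjoint.

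For the construction, let $R^i_1,\dots,R^i_{t_i}$ list the checks containing $c_i$. For each incidence of $c_i$ in a check $R$ I introduce one hyperedge (a \emph{copy}) $e_{i,R}$, and I arrange the copies of $c_i$ in a cyclic chain. A \emph{check-vertex} $v_R$ is declared incident to all copies $e_{i,R}$ with $i\in R$, and two cyclically consecutive copies of the same $c_i$ share one \emph{chain-vertex}, private to $c_i$, of degree two. Then each $e_{i,R}=\{v_R,u,u'\}$ has exactly three vertices, namely its check-vertex and the two chain-vertices joining it to its cyclic neighbours; the degree-two chain-vertices force all copies of a given $c_i$ to carry the same value, and the degree-$|R|$ check-vertex $v_R$ imposes exactly $\sum_{i\in R}c_i=0$. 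Hence $Ker_{\BF_2}(I(H))$ consists precisely of the assignments in which all copies of each $c_i$ agree and all checks of $P$ hold, which yields a bijection $\C\to K=Ker_{\BF_2}(I(H))$. Setting $w'(e_{i,R}):=w(i)$ on one distinguished copy of each $c_i$ and $w':=0$ on all remaining copies, a nonzero $c_i$ switches on all its copies and contributes exactly $w(i)$ to the image weight, so $w'$ transports $w$ across the bijection and $W_{\C,w}(x)=W_{K,w'}(x)$.

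The step I expect to be the main obstacle is certifying the almost disjoint property globally, in tandem with fixing the hyperedge size at three. Two copies of the same variable meet only when cyclically consecutive, in a single chain-vertex, and two copies of distinct variables meet only through a common check-vertex $v_R$, again in a single vertex --- but this holds only when the cyclic chain of each variable has length at least three, since a chain of length two would force two hyperedges to share both of their chain-vertices. I therefore normalise $P$ beforehand by tripling each of its rows, an operation that leaves $Ker_{\BF_2}(P)=\C$ unchanged and makes every constrained coordinate appear in at least three checks; this is exactly the place where a naive path-linking, or a degree-one ``cap'' vertex, would either break almost disjointness or spuriously force a coordinate to $0$. (Unconstrained coordinates of $\C$, if any, are realised by attaching for each an independent minimal configuration of four triangles whose cycle space is one-dimensional, carrying $w(i)$ on one of its triangles.) With this normalisation every invariant --- three vertices per hyperedge, pairwise intersections of size at most one, and absence of loops --- follows directly from the cyclic bookkeeping, and the hypergraph has size polynomial in the number of ones of $P$, giving the claimed polynomial-time construction.
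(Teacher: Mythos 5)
Your construction is correct. The degree-two chain-vertices force all copies of a coordinate to agree, the check-vertex at $v_R$ imposes exactly the parity constraint of row $R$, and the resulting bijection between $\C=Ker_{\BF_2}(P)$ and $Ker_{\BF_2}(I(H))$ is weight-preserving once you put $w(i)$ on one distinguished copy and $0$ elsewhere. The two points you flag as delicate do check out: tripling the rows of $P$ guarantees every chain has length at least three, so consecutive copies of the same coordinate share exactly one chain-vertex, copies of distinct coordinates meet only in a single check-vertex, and every hyperedge consists of three distinct vertices; and the four-triangle gadget (four triples pairwise intersecting in one vertex, all six vertices of degree two) indeed has a one-dimensional kernel and handles unconstrained coordinates. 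The paper, however, does not prove this statement from scratch: its proof is a one-line reduction to Theorem 6 of \cite{R1} (Rytir's geometric representation of binary codes by triangular configurations), only specifying how to choose the weights on the distinguished and auxiliary triangles. Your argument is therefore essentially a self-contained reproof of the portion of Rytir's theorem that is needed here, built on the same underlying mechanism (values propagated along chains of degree-two vertices, parities collected at high-degree vertices). What your version buys is independence from \cite{R1} together with an explicit verification of almost disjointness; what the citation buys the paper is brevity and consistency with the notation of \cite{R1} that it reuses later.
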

\begin{proof}
This follows from Theorem 6 of \cite{R1} when we let (using notation of Theorem 6) $w'(B^n_i)= w(i), 1\leq i\leq n$, and $w'(e)= 1$ for each remaining triple $e\in E$.

\end{proof}

We remark that Theorem \ref{thm.r1} was generalized to linear codes over prime fields in the doctoral thesis of Rytir (see \cite{R2}). This suggests that results of this paper may be true for
all linear codes over prime fields.

Let $H=(V,E)$ be an almost disjoint k-hypergraph and let $t:E\rightarrow \BR$ be a weight function. We let

$$\P_{H,t}(x)= \sum_{E'\text{ perfect matching} }\,\,\prod_{e\in E'}x^{t(e)}~.$$

\begin{theorem} \cite{R1}
\label{thm.r2}
Let $H=(V,E)$ be an almost disjoint 3-hypergraph and let $t:E\rightarrow \BR$ be a weight function. Then one can construct an almost disjoint 3-hypergraph $H'= (V',E')$ and a weight function $t': E'\rightarrow \BR$ so that: if we denote $K= Ker_{\BF_2}(I(H))$ then 
$$
W_{K, t}(x)= \P_{H',t'}(x).
$$
Moreover, $H'$ has a perfect matching $P$ such that $t'(e)= 0$ for each $e\in P$.
\end{theorem}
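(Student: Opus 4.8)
The plan is to mimic Fisher's construction (Step 1 of subsection \ref{sub.disf}) at the level of $3$-hypergraphs. First I would record what the left-hand side counts. A vector $c\in\BF_2^E$ lies in $K=Ker_{\BF_2}(I(H))$ exactly when, for every vertex $v$, the support of $c$ contains an even number of edges incident with $v$; writing $E'\subseteq E$ for this support we thus have
$$
W_{K,t}(x)=\sum_{\substack{E'\subseteq E\\ E'\ \mathrm{even}}}\ \prod_{e\in E'}x^{t(e)},
$$
where \emph{even} means that each vertex of $H$ is covered an even number of times by $E'$. So the task is to realize the generating function of even edge-sets of $H$ as the perfect-matching generating function $\P_{H',t'}$ of some almost disjoint $3$-hypergraph $H'$.

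The construction attaches a local gadget to each vertex and lets the edges of $H$ link the gadgets. For a vertex $v$ of degree $d$ I would build an almost disjoint $3$-hypergraph $\Gamma_v$ carrying one distinguished \emph{port} $p_{v,e}$ for each edge $e\ni v$, together with auxiliary vertices, engineered to satisfy the following \emph{parity property}: for every subset $S\subseteq\{p_{v,e}:e\ni v\}$ the hypergraph $\Gamma_v$ has a perfect matching of $V(\Gamma_v)\setminus S$ by internal hyperedges if and only if $|S|$ is even, and in that case the matching is unique. I then form $H'$ from the disjoint union of the $\Gamma_v$ by adding, for each edge $e=\{u,v,w\}$ of $H$, the single hyperedge $f_e=\{p_{u,e},p_{v,e},p_{w,e}\}$. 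The weight $t'$ is set to $t'(f_e)=t(e)$ on these edge-hyperedges and $t'=0$ on all internal hyperedges of the gadgets.

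Now a perfect matching $M$ of $H'$ decomposes uniquely into the set of edge-hyperedges it uses and, inside each $\Gamma_v$, an internal matching of the ports not used externally. The set of ports of $\Gamma_v$ used externally is exactly $\{p_{v,e}: f_e\in M,\ v\in e\}$, so by the parity property the internal parts can be completed---and completed uniquely---precisely when $E'(M)=\{e:f_e\in M\}$ is even at every vertex, i.e.\ when $E'(M)$ is an even edge-set of $H$. This yields a bijection between perfect matchings of $H'$ and even edge-sets of $H$ under which the matching weight $\prod_{f\in M}x^{t'(f)}$ equals $\prod_{e\in E'(M)}x^{t(e)}$, since internal edges contribute $x^{0}=1$. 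Hence $\P_{H',t'}(x)=W_{K,t}(x)$. The even set $E'=\emptyset$ corresponds to matching every gadget internally, producing a perfect matching $P$ all of whose edges are internal and therefore satisfy $t'(e)=0$, the required matching. Almost disjointness of $H'$ follows from that of $H$ (so the $f_e$ pairwise meet in at most one port) together with almost disjointness built into each $\Gamma_v$, provided the gadgets are glued only along their ports.

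The real work is the gadget lemma: producing, for each $d$, an almost disjoint $3$-hypergraph with the stated parity property. The tension is that the matching condition is modular $3$ (hyperedges have size three) whereas the constraint to be encoded is the parity, modulo $2$, of the number of active ports; reconciling the two---while also forcing the internal completion to be \emph{unique} and keeping the whole gadget almost disjoint---is the crux. I would build $\Gamma_v$ by chaining $d$ copies of a small fixed module in a cycle around $v$, where each module has one external port and two internal ports linking it to its neighbours and enforces that its outgoing internal state is the XOR of the incoming state and the external port; closing the cycle then forces the number of active external ports to be even, exactly the even-set condition. Each module is chosen with a vertex count making it matchable by triples in precisely the two internal states, so that the global internal completion is determined once the external pattern is fixed; pinning the cyclic symmetry at one module removes the residual two-fold freedom and secures uniqueness. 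Verifying that such a module exists, is almost disjoint, and chains correctly is the main technical obstacle, and is where the construction of \cite{R1} does the heavy lifting.
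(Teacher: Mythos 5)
There is a genuine gap, and in fact the central lemma you posit is false as stated. Your ``parity property'' asks that $\Gamma_v$ minus a set $S$ of ports have a perfect matching by internal triples if and only if $|S|$ is even. But a $3$-uniform hypergraph can only perfectly match a vertex set whose cardinality is divisible by $3$, so you would need $|V(\Gamma_v)|-|S|\equiv 0 \pmod 3$ simultaneously for $|S|=0$ and $|S|=2$ (both occur once $\deg(v)\geq 2$), which is impossible. You correctly sense this mod-$2$ versus mod-$3$ tension, but you do not resolve it, and no gadget of the architecture you describe (external hyperedge $f_e$ simply consuming one port of each incident gadget) can resolve it. The standard fix --- and the one used in \cite{R1} and echoed in this paper's own proof of Theorem \ref{thm.maindet} --- is to represent each edge $e$ of $H$ by \emph{two alternative local matchings covering the same set of new vertices} (an ``in'' state and an ``out'' state, cf.\ the sets $M_1(e)$ and $M_2(e)$ there), so that the vertex count matched locally is independent of whether $e$ is selected; the parity constraint at each vertex is then enforced by a separate vertex gadget. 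Since your proposal explicitly defers the construction of the module to \cite{R1} (``is where the construction of \cite{R1} does the heavy lifting''), the crux of the argument is absent, and what is specified in its place cannot work.

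For comparison, the paper does not reprove the result at all: it invokes Theorem 20 of \cite{R1} for the identity $W_{K,t}(x)=\P_{H',t'}(x)$, and obtains the distinguished perfect matching $P$ from the weight-preserving bijection of subsection 5.6 of \cite{R1} by taking the matching corresponding to the zero codeword. Your final observation that $P$ should correspond to the empty even set (equivalently the zero codeword) and consist of weight-$0$ internal edges is consistent with that, but it rests on the unproven and, as formulated, unprovable gadget lemma.
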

\begin{proof}
The first part of the theorem is Theorem 20 of \cite{R1}. The existence of perfect matching $P$ follows easily from subsection 5.6 of \cite{R1} where a weight-preserving bijection between codewords of $K$ and perfect matchings of $H'$ is constructed. The desired perfect matching $P$ corresponds to the zero of the code $K$.

\end{proof}

Theorem 4 of \cite{LR}  strengthens Theorem \ref{thm.r2} as follows:

\begin{theorem}\cite{LR}
\label{thm.r3}
The statement of Theorem \ref{thm.r2} remains true if we require in addition that $H'$ is 3-partite.
 \end{theorem}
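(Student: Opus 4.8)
The plan is to start from the output $(H',t')$ of Theorem~\ref{thm.r2}, together with its distinguished perfect matching $P$ on which $t'$ vanishes, and to repair the failures of $3$-partiteness one edge at a time by a local gadget substitution that preserves both the perfect-matching generating function and almost disjointness. First I would use $P$ to fix a candidate tripartition: order the three vertices of each $p\in P$ and let $V_i$ ($i=1,2,3$) collect the $i$-th vertex of every matching edge. Then each $p\in P$ meets each $V_i$ exactly once, so $P$ is already \emph{rainbow} (it respects the tripartition), and by Theorem~\ref{thm.r2} we may take $t'(p)=0$ for $p\in P$. Call an edge $e\in E'\setminus P$ \emph{good} if its three vertices carry all three colours and \emph{bad} otherwise; $3$-partiteness fails precisely on the bad edges.

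The heart of the argument is a substitution that deletes a bad edge $e=\{a,b,c\}$ and replaces it by a small sub-hypergraph on $\{a,b,c\}$ together with a set $X$ of \emph{private} auxiliary vertices. The gadget should be rainbow (every new edge meets all three colour classes) and should admit exactly two ``internal states'': a \emph{used} state, a set of edges covering $a,b,c$ and all of $X$, carrying the weight $t'(e)$ on one designated edge and $0$ elsewhere; and an \emph{unused} state, covering all of $X$ but none of $a,b,c$, with every edge of weight $0$. Given such gadgets, a perfect matching of the new hypergraph $H''$ restricts on each gadget to one of the two states, and contracting this data gives a weight-preserving bijection with perfect matchings of $H'$ (the used/unused dichotomy reproduces the ``$e$ selected / $e$ not selected'' alternative exactly as in $\P_{H',t'}$). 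Collecting the unused-state edges of all gadgets together with $P$ yields a rainbow perfect matching $P''$ of $H''$ on which $t''$ vanishes, so that $(H'',t'')$ is almost disjoint, $3$-partite, carries the zero-weight perfect matching $P''$, and satisfies $\P_{H'',t''}=\P_{H',t'}=W_{K,t}$, as required.

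I expect the main obstacle to be the explicit construction of the gadget, and here one should resist the naive analogy with graphs. Subdividing a monochromatic edge into a path cannot work, because the parity forcing a clean two-state matching (an even number of internal vertices) clashes with the parity needed to reconcile two boundary vertices of the same colour (which would require an odd distance, hence different colours). The $3$-partite setting has more room, and one exploits it: when two boundary vertices, say $a,b$, share a colour, no rainbow edge can contain both, so the used state must split $a$ and $b$ into distinct rainbow edges, routed through $X$. Constructing a correct gadget therefore requires a short case analysis over the colour pattern of $e$ (the patterns $(1,1,2)$ and its permutations, and $(1,1,1)$), with the auxiliary vertices $X$ coloured so that both states consist of rainbow edges.

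The remaining, more routine, verification is almost disjointness. Internally this is checked on each gadget once it is exhibited. Across the boundary it is automatic: since $H'$ is almost disjoint, any edge $f\neq e$ meets $\{a,b,c\}$ in at most one vertex, so after the substitution $f$ meets the gadget's vertex set in at most one vertex and hence shares at most one vertex with every gadget edge. Thus, once a rainbow two-state gadget is produced for each colour pattern, the bijection on perfect matchings, the bookkeeping of weights, and the preservation of the almost-disjoint $3$-partite structure all follow, completing the strengthening of Theorem~\ref{thm.r2}.
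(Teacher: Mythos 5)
There is a genuine gap, and it is not merely that you leave the gadget unconstructed: the two-state gadget you require cannot exist. In a $3$-partite $3$-uniform hypergraph every edge meets each colour class exactly once, so any matching consisting of rainbow edges covers equally many vertices of each colour. Your gadget lives on $\{a,b,c\}\cup X$ with $X$ private; its unused state is a rainbow matching covering exactly $X$, forcing $|X\cap V_1|=|X\cap V_2|=|X\cap V_3|$, and its used state is a rainbow matching covering exactly $X\cup\{a,b,c\}$, forcing $|(X\cup\{a,b,c\})\cap V_i|$ to be independent of $i$ as well. Subtracting, $\{a,b,c\}$ must meet each colour class exactly once, i.e., $e$ was already good. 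So for a bad edge no rainbow gadget with private auxiliary vertices can have both a ``used'' and an ``unused'' state, whatever case analysis over colour patterns you attempt; the parity obstruction you correctly identified for the naive subdivision is in fact a colour-counting obstruction that kills every local substitution of this shape. (The same counting shows you cannot fix it by letting the used state skip some of $X$: private vertices must be covered in every perfect matching of the new hypergraph.)

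Consequently the overall strategy of fixing the tripartition from $P$ at the outset and then repairing bad edges one at a time cannot be completed. The paper does not construct anything here; it invokes Theorem 4 of \cite{LR}, and whatever that construction does, it cannot be a vertex-preserving local patch of the above kind -- it must change the ambient vertex set (or the colouring) globally, e.g., by re-encoding the hypergraph rather than by substituting gadgets for the non-rainbow edges. If you want a self-contained proof, you need a construction in which the colour classes of the boundary of each replaced piece are already balanced, which means the replacement has to act on something larger than a single bad edge.
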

\begin{proof}
This is a direct consequence of  Theorem 4 of \cite{LR} and Theorem \ref{thm.r2}.

\end{proof}

The results of this subsection are summarized in the following:

\begin{theorem}
\label{thm.mainper}
Let $\C$ be a binary linear code of length $n$ over the binary field $\BF_2$, let $w:\{1,\ldots, n\}\rightarrow \BR$ be a real weight function and let $W_{\C, w}(x)$ be the weight enumerator of $(\C, w)$.
Then one can construct in polynomial time an almost disjoint 3-partite 3-hypergraph $H= (V,E)$, its perfect matching $P$ and weight function $w': A\rightarrow \BR$ so that $w'(p)= 0$ for each $p\in P$ and
$$
W_{\C, w}(x)= \Per(T(H,w')).
$$ 
\end{theorem}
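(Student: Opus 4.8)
The plan is to compose the three geometric-representation results of this subsection and then rewrite the resulting perfect-matching generating function as a permanent; no genuinely new construction is needed, only a careful chaining of equalities.

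First I would apply Theorem \ref{thm.r1} to the code $\C$ together with its weight function $w$. This produces in polynomial time an almost disjoint 3-hypergraph $H_1 = (V_1, E_1)$ and a weight function $w_1: E_1 \to \BR$ such that
$$
W_{\C, w}(x) = W_{K_1, w_1}(x), \qquad K_1 = Ker_{\BF_2}(I(H_1)).
$$
Thus the abstract code is realized as the kernel code of the incidence matrix of an almost disjoint 3-hypergraph, which is exactly the input format required by the next step.

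Next I would feed the pair $(H_1, w_1)$ into Theorem \ref{thm.r3}, the 3-partite strengthening of Theorem \ref{thm.r2}. Since $K_1 = Ker_{\BF_2}(I(H_1))$, this yields in polynomial time an almost disjoint 3-partite 3-hypergraph $H = (V, E)$, a weight function $w': E \to \BR$, and a perfect matching $P$ of $H$ with $w'(p) = 0$ for every $p \in P$, such that $W_{K_1, w_1}(x) = \P_{H, w'}(x)$. Chaining this with the previous display gives $W_{\C, w}(x) = \P_{H, w'}(x)$, and the composition of two polynomial-time constructions is again polynomial-time. Finally I would invoke the illustration following the definition of the permanent: for a $k$-partite $k$-hypergraph one has $\Per(T(H, x)) = \sum_{M} \prod_{e \in M} x_e$, the sum running over perfect matchings $M$. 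Substituting $x_e := x^{w'(e)}$ turns the right-hand side into $\P_{H, w'}(x)$ by the definition of $\P_{H, w'}$, whence $\P_{H, w'}(x) = \Per(T(H, w'))$, giving the desired identity $W_{\C, w}(x) = \Per(T(H, w'))$.

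Because the statement is a pure composition of earlier theorems, I expect no substantive obstacle. The only point requiring care is bookkeeping of the weight substitution: one must verify that the exponent convention in the definition of $\P_{H, w'}(x)$ matches exactly the substitution $x_e := x^{w'(e)}$ used to pass from $T(H, x)$ to $T(H, w')$, so that the permanent and the perfect-matching generating function coincide term by term and the equality holds on the nose, with no spurious global factor or sign, as asserted.
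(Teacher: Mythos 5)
Your proposal is correct and follows essentially the same route as the paper: the paper's proof of Theorem \ref{thm.mainper} is precisely the composition of Theorem \ref{thm.r1} with Theorem \ref{thm.r3}, together with the (implicit) identification of $\P_{H,w'}(x)$ with $\Per(T(H,w'))$ via the stated illustration that the permanent of the transition matrix is the perfect-matching generating function. Your extra care about the exponent substitution $x_e := x^{w'(e)}$ is a reasonable bookkeeping check but introduces nothing beyond what the paper's one-line proof already relies on.
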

\begin{proof}
This follows directly from Theorem \ref{thm.r1} and Theorem \ref{thm.r3}.

\end{proof}

\subsection{Kasteleyn hypermatrices, 3-permanents and 4-determinants}
\label{sub.4det}

\begin{definition}
\label{def.3ka}
We say that a k-matrix $A$ is {\em Kasteleyn} if
there is k-matrix $A'$ obtained from $A$ by changing signs of some entries, i.e., $A'$ is a signing of $A$, so that
$\Per (A)= \det(A')$.
\end{definition}

If $A$ is the transition matrix of a 2-partite 2-hypergraph, i.e., of a bipartite graph, then already Kasteleyn \cite{K} noticed that $A$ is Kasteleyn provided the bipartite graph is {\em planar}. 
Kasteleyn 2-matrices were characterized in a seminal paper \cite{RST}: the set of Kasteleyn 2-matrices is severely restricted and does not go far beyond the transition matrices of planar bipartite graphs.  
An important observation of \cite{LR} is that Kasteleyn 3-matrices form a rich class. 

 Theorem \ref{thm.k1} below generalizes Theorem 6 of \cite{LR} from Kasteleyn 3-matrices to Kasteleyn k-matrices, $k\geq 3$. The idea of the proof for general $k\geq 3$ is the same as for $k=3$ but we decided to include the proof in order to make the paper easier to follow.

 Let $A$ be a $V_1\times\ldots \times V_k\times V_{k+1}$ hypermatrix, $k\geq 2$, and $|V_i|= |V_j|$ for all $i\neq j$.  We first define k bipartite graphs $G_1, \ldots, G_k$ as follows.
We let, for $1\leq i\leq k$, $G_i= (V_{k+1}, V_i, E_i)$ where 
$$
E_i= \left\{\{v_{k+1}, v_i\}: v_{k+1}\in V_{k+1}, v_i\in V_i \text{ and there is } v= (v_1, \ldots, v_{i-1}, v_i,\ldots, v_{k+1}) \text{ such that } A_v\neq 0\right\}.
$$

\begin{theorem}
\label{thm.k1}
If $A$ is such that all $G_1, G_2,\ldots, G_k$ are planar bipartite graphs then $A$ is Kasteleyn.
\end{theorem}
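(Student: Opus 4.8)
The plan is to exhibit an explicit signing of $A$ whose signs, direction by direction, cancel the permutation signs that distinguish $\det$ from $\Per$. Writing out the two definitions, a signing $A'$ with $A'_v = \epsilon_v A_v$, $\epsilon_v \in \{\pm 1\}$, satisfies
\[
\det(A') = \sum_{\alpha_1,\ldots,\alpha_k} \Big(\prod_{j=1}^k \sign(\alpha_j)\Big)\Big(\prod_{i \in V_{k+1}} \epsilon_{(\alpha_1(i),\ldots,\alpha_k(i),i)}\Big)\prod_{i\in V_{k+1}} A_{(\alpha_1(i),\ldots,\alpha_k(i),i)},
\]
so $\Per(A)=\det(A')$ will follow once I find $\epsilon$ for which the bracketed scalar equals a fixed constant (ideally $1$) on every $(\alpha_1,\ldots,\alpha_k)$ contributing a nonzero monomial.

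The key idea is to choose $\epsilon$ to \emph{factor} over the $k$ directions. For each $j$ let $s_j\colon V_{k+1}\times V_j \to \{\pm1\}$ be a signing of the edge set $E_j$ of $G_j$ (extended arbitrarily off $E_j$), and set
\[
\epsilon_{(v_1,\ldots,v_k,v_{k+1})} = \prod_{j=1}^k s_j(v_{k+1},v_j).
\]
Substituting and regrouping the product over $i\in V_{k+1}$ by the direction index $j$, the sign attached to a term $(\alpha_1,\ldots,\alpha_k)$ becomes
\[
\prod_{j=1}^k \sign(\alpha_j)\prod_{i\in V_{k+1}} s_j(i,\alpha_j(i)) = \prod_{j=1}^k\Big[\sign(\alpha_j)\prod_{i\in V_{k+1}} s_j(i,\alpha_j(i))\Big].
\]
Thus the problem decouples: it suffices to make each bracketed factor constant in $\alpha_j$ separately.

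Now observe that whenever the monomial $\prod_i A_{(\alpha_1(i),\ldots,\alpha_k(i),i)}$ is nonzero, every factor is nonzero, so for each $i$ and each $j$ the pair $\{i,\alpha_j(i)\}$ is an edge of $G_j$; hence each $\alpha_j$ restricts to a perfect matching of the bipartite graph $G_j=(V_{k+1},V_j,E_j)$. The requirement that $\sign(\alpha_j)\prod_i s_j(i,\alpha_j(i))$ be independent of the perfect matching $\alpha_j$ is exactly the Kasteleyn (Pfaffian) condition for $G_j$. Since $G_j$ is planar bipartite, the classical Kasteleyn construction (as in subsection \ref{sub.disf}) supplies such a signing $s_j$, with constant value $c_j\in\{\pm1\}$. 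With the $s_j$ chosen this way we get $\det(A')=\big(\prod_j c_j\big)\Per(A)$; if $\prod_j c_j=-1$ I flip the signs of all entries in one slice $v_{k+1}=i_0$ (each term uses exactly one such entry, so this multiplies the whole determinant by $-1$), arranging $\det(A')=\Per(A)$. Hence $A$ is Kasteleyn.

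The conceptual heart of the argument — and the step I expect to carry the weight — is the decoupling: the multilinear sign $\prod_j\sign(\alpha_j)$ of the $(k{+}1)$-determinant factors as a product over directions, so a product signing $\epsilon=\prod_j s_j$ can be tuned one direction at a time even though $A$ itself carries no planar structure. Once this is seen, each direction reduces to an ordinary planar bipartite Pfaffian problem handled by Kasteleyn's theorem; the only points needing care are that nonzero monomials force each $\alpha_j$ to be a genuine perfect matching of $G_j$ (so that the Kasteleyn constancy is applicable) and the harmless global-sign normalization at the end.
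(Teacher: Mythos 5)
Your proposal is correct and follows essentially the same route as the paper: the signing $\epsilon_{(v_1,\ldots,v_k,v_{k+1})}=\prod_j s_j(v_{k+1},v_j)$ is exactly the paper's $A'_{(v_1,\ldots,v_{k+1})}=[\prod_i \sign_i(\{v_i,v_{k+1}\})]A_{(v_1,\ldots,v_{k+1})}$, and both arguments decouple the determinant's sign direction by direction, using that a nonzero monomial forces each $\alpha_j$ to be a perfect matching of the planar bipartite $G_j$ so that Kasteleyn's theorem makes $\sign(\alpha_j)\prod_i s_j(i,\alpha_j(i))$ constant. Your explicit handling of the possible global sign $\prod_j c_j=-1$ by flipping a slice is a minor refinement the paper leaves implicit.
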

\begin{proof}
Let $M_i$ be the transition matrix of $G_i$ and let $\sign_i:E(G_i)\mapsto\{-1,1\}$ be the signing of entries of $M_i$ which defines matrix $M_i'$ satisfying $\Per(M_i)= \det(M_i')$. We recall that such signing exists due to the above mentioned result of Kasteleyn since $G_i$ is a planar bipartite graph.
We define hypermatrix $A'$ by 
$$
A'_{(v_1, \ldots, v_k, v_{k+1})}= [\prod_{1\leq i\leq k}\sign_i(\{v_i, v_{k+1}\})]A_{(v_1, \ldots, v_{k+1})}.
$$
We have directly from the definition of the determinant and of $A'$ that
$$
\det(A')= \sum_{\si_1, \ldots, \si_{k-1}}(\prod_{1\leq i< k}\sign(\si_i))\times
\sum_{\si_k}\sign(\si_k)
\prod_{j\in V_{k+1}} \sign_k(\{\si_k(j),j\})\times
[\prod_{1\leq i< k}\sign_i(\{\si_i(j),j\})]A_{(\si_1(j)\ldots\si_k(j),j)}.
$$
By the construction of $\sign_k$ we have that for each $\si_k$ and each $\si_1, \ldots, \si_{k-1}$, if
$\prod_{j\in V_{k+1}} A_{(\si_1(j)\ldots, \si_k(j),j)}\neq 0$ then
$$
\sign(\si_k)\prod_{j\in V_{k+1}} \sign_k(\{\si_k(j),j\})= 1
$$
since $\{\{\si_k(j),j\}: j\in V_{k+1}\}$ is a perfect matching of $G_k$.
Hence
$$
\det(A')= \sum_{\si_k}\sum_{\si_1, \ldots, \si_{k-1}} (\prod_{1\leq i< k}\sign(\si_i))
\prod_{j\in V_{k+1}}[\prod_{1\leq i< k}\sign_i(\{\si_i(j),j\})]A_{(\si_1(j)\ldots\si_k(j),j)}.
$$

Continuing this way for $k-1, \ldots, 1$ we get
$\det(A')= \Per(A)$.

\end{proof}

The main result of this subsection is the following theorem. We remark that Theorem 5 of \cite{LR} shows a {\em weaker statement} for bipartite graphs, i.e., for 2-partite 2-hypergraphs; namely, the resulting 3-partite 3-hypergraph is not necessarily {\em almost disjoint} in Theorem 5 of \cite{LR}. Hence a new construction has to be invented to show:

\begin{theorem}
\label{thm.maindet}
Let $H=(V,E)$ be an almost disjoint 3-partite 3-hypergraph, let $P$ be a perfect matching and let $w: E\rightarrow \BR$ be a weight function so that $w(p)= 0$ for each $p\in P$. Then one can construct in polynomial time an almost disjoint 4-partite 4-hypergraph $H'= (V',E')$, a perfect matching $P'$ and a weight function $w': E'\rightarrow \BR$ so that $w'(p)= 0$ for each $p\in P'$ and
$$
\Per(T(H,w,z))= \det(T'),
$$ 
where $T'$ is a signing of $T(H',w',z)$.
\end{theorem}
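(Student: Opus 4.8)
The plan is to reduce the statement to Theorem \ref{thm.k1}. By that theorem a $4$-matrix is Kasteleyn as soon as the three bipartite shadow graphs associated with its \emph{last} part are planar. Hence it suffices to construct an almost disjoint $4$-partite $4$-hypergraph $H'=(V_1',V_2',V_3',V_4',E')$, a perfect matching $P'$ and a weight function $w'$ with $w'(p)=0$ for $p\in P'$, such that
\begin{equation}
\Per(T(H',w',z))=\Per(T(H,w,z)) \label{eq.preserve}
\end{equation}
and such that the three shadows $G_i=(V_4',V_i',E_i)$, $i=1,2,3$, of $T(H',w',z)$ are planar. Indeed, Theorem \ref{thm.k1} then produces a signing $T'$ of $T(H',w',z)$ with $\det(T')=\Per(T(H',w',z))$, and combining this with \eqref{eq.preserve} gives $\det(T')=\Per(T(H,w,z))$, as required. (One also arranges, by routine forced zero-weight padding edges placed in $P'$, that all four parts have a common cardinality, as Theorem \ref{thm.k1} demands.)

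The conceptual reason the lift from dimension $3$ to dimension $4$ is useful is that Theorem \ref{thm.k1} constrains \emph{only} the three shadows meeting the new part $V_4'$; the shadows among $V_1',V_2',V_3'$ are irrelevant. The genuine non-planarity of $H$ lives precisely in the adjacency among its three parts, and it may be left untouched among $V_1',V_2',V_3'$. I would therefore keep $V_1',V_2',V_3'$ as copies of $V_1,V_2,V_3$ carrying the original incidences, and introduce $V_4'$ only as an auxiliary part whose shadows are sparse by design. Concretely, each hyperedge $e=(e_1,e_2,e_3)$ of $H$ is replaced by a short chain of $4$-edges threading freshly created $V_4'$-vertices, together with forced auxiliary $4$-edges whose unique local completion in any perfect matching contributes the factor $1$. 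Designing the chain so that, inside each shadow $G_i$, the private $V_4'$-vertices span only stars and paths turns each $G_i$ into a forest, hence planar. Setting $w'$ equal to $w$ on the single edge of each gadget that carries the original weight and $0$ on every auxiliary edge, and taking $P'$ to extend $P$ by the forced auxiliary matching edges (all of weight $0$), makes the forced completions contribute $z^0=1$; this yields \eqref{eq.preserve} and the condition $w'(p)=0$ for $p\in P'$.

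Almost disjointness of $H'$ is checked by using only fresh vertices inside each gadget and wiring the chains so that any two $4$-edges meet in at most one vertex; this inherits the almost disjointness of $H$ on $V_1',V_2',V_3'$ and is immediate on the private $V_4'$-vertices. Since the construction is local, with one bounded gadget per hyperedge of $H$, it runs in polynomial time.

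The hard part will be the simultaneous fulfilment of three competing demands on the gadget: (i) all three $V_4'$-shadows must be planar; (ii) $H'$ must remain almost disjoint, i.e.\ no two $4$-edges may share two vertices; and (iii) the weighted perfect-matching count must be preserved \emph{exactly}, which forces every matching of $H$ to admit a unique completion to a matching of $H'$, and in particular forces the auxiliary $V_4'$-vertices of the gadget for $e$ to be coverable uniquely both when $e$ is selected and when it is not (a ``skippable'' forcing gadget). Requirement (ii) forbids the wide gadgets that would most easily planarize the shadows, which is exactly why the earlier construction (Theorem~5 of \cite{LR}) only produces a non-almost-disjoint hypergraph. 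The technical core is thus to design an almost disjoint chain gadget that nevertheless forces a unique local completion while keeping the three fourth-part shadows acyclic.
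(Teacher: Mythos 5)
Your high-level strategy is exactly the paper's: build an almost disjoint $4$-partite $4$-hypergraph $H'$ with the same weighted perfect-matching generating function and with the three shadow graphs $G_1,G_2,G_3$ planar, then invoke Theorem \ref{thm.k1}. But the proposal stops precisely where the proof begins. The entire mathematical content of Theorem \ref{thm.maindet} is the gadget, and you never construct it; you only list the three properties it must have and acknowledge in your last paragraph that reconciling them is ``the technical core.'' A referee cannot accept ``design an almost disjoint chain gadget that forces a unique local completion while keeping the shadows acyclic'' as a proof that such a gadget exists -- especially since, as you yourself note, the analogous attempt in \cite{LR} (Theorem 5 there) fails to achieve almost disjointness, so existence is genuinely in doubt until exhibited.

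Moreover, the one concrete design commitment you do make is unworkable: you propose to keep $V_1',V_2',V_3'$ as copies of $V_1,V_2,V_3$ and confine the fresh vertices to the auxiliary part $V_4'$. In a $4$-partite $4$-hypergraph every hyperedge meets each part in exactly one vertex, so if the lift of $e=\{v_1,v_2,v_3\}$ is $e\cup\{e_4\}$ with $e_4\in V_4'$, then in any perfect matching of $H'$ corresponding to a matching of $H$ that \emph{omits} $e$, the vertex $e_4$ must be covered by some other $4$-edge, and that edge necessarily occupies one vertex in each of $V_1',V_2',V_3'$ -- vertices which, under your plan, are all original and already covered. So the ``skippable'' forcing gadget forces fresh vertices into \emph{all four} parts. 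This is exactly what the paper's construction does: for each $e$ it introduces the vertices $e_i$ ($i=1,\ldots,4$) distributed over all four parts, the alternative edge $E_2(e)=\{e_1,e_2,e_3,e_4\}$ competing with $E_1(e)=e\cup\{e_4\}$ for $e_4$, and then two further layers of degree-$2$ vertices ($e^j_k$ and $e^j_k(l)$, plus the covering edges $Q^m_k(e)$) whose role is to propagate the choice consistently while keeping every non-special vertex of degree at most two in each shadow, so that each $G_i$ decomposes into internally disjoint paths and disjoint even cycles (not forests, incidentally) and is therefore planar. None of this bookkeeping -- including the verification that all four parts end up with equal cardinality and that the matching correspondence $Q\mapsto Q'$ is a weight-preserving bijection -- is present in your write-up, so the argument has a genuine gap rather than being an alternative route.
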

\begin{proof}
Let $V= (V_1, V_2, V_3)$ be the 3-partition of $V$. We note that existence of a perfect matching implies that all three parts have the same size which we denote by $n$. We let $X= \{1,2,3,4\}$ and initially we let $V_i\subset V_i(H')$ for each $i= 1,2,3$. The construction consists in two steps.

{\bf Step 1.} For each $e= \{v_1,v_2,v_3\}\in E$, where $v_k\in V_k$, $k= 1,2,3$,  we
\begin{itemize}
\item
introduce 4 new vertices $e_i, i\in X$; we let $e_i\in V_i(H')$,
\item
introduce 2 new edges $E_1(e)= e\cup \{e_4\}$ and $E_2(e)= \{e_1, e_2, e_3,e_4\}$; let $w'(E_1(e))= w(e), w'(E_2(e))= 0$,
\item
introduce 36 new vertices:
for each $e_k$, $k= 1,2,3$, and $j \in X\setminus \{k\}$ we (1) introduce vertex $e^j_k\in V_j(H')$ and (2) for each $e^j_k$ and $l\in X\setminus \{j\}$ introduce
vertex $e^j_k(l)\in V_l(H')$,
\item
introduce 12 new edges of weight $0$: for each $e_k$, $k= 1,2,3$, let $F_k(e)= \{e_k\}\cup \{e^j_k; j\in X\setminus \{k\}\}$ and for  each $e^j_k$ let
$F^j_k(e)= \{e^j_k\}\cup \{e^j_k(l); l\in X\setminus \{j\}\}$.
\end{itemize}
New (14) edges introduced in this step form a 'tree' where each vertex has degree at most two; the vertices of degree 1 of this 'tree' are the three vertices of the original edge $e$ and the vertices   $\{e_k^j(l); k= 1,2,3, j\in X\setminus \{k\}, l\in X\setminus \{j\}\}$.

{\bf Step 2.} For each $k= 1,2,3$,  we 
\begin{itemize}
\item
introduce 3 new vertices $v_k^j; j\in X\setminus \{k\}$; we let $v_k^j\in V_j(H')$,
\item
For each $k= 1,2,3$ we introduce 3 disjoint edges $Q_k^1(e), Q_k^2(e), Q_k^3(e)$ covering the  vertices of
$\{v_k^j; j\in X\setminus \{k\}\cup  \{e_k^j(l); j\in X\setminus \{k\}, l\in X\setminus \{j\}\}$.
It is simple to see that such 4 edges always exist. We do not specify them to make the construction easier to follow.
\end{itemize}
This finishes the construction of $H', w'$.
Let $e\in E(H)$. We define two sets: 
$$
M_1(e)=\{E_1(e), F_1(e), F_2(e), F_3(e)\}\cup \{Q_k^m(e); k=1,2,3,  m=1,2,3 \},
$$
$$
M_2(e)= \{E_2(e)\}\cup \{F_k^j(e); k=1,2,3, j\in X\setminus \{k\} \}.
$$
If $Q$ is a perfect matching of $H$ then let $Q'= \Cup(M_1(e); e\in Q) \cup \Cup(M_2(e); e\notin Q)$. We observe that 
$Q$ is perfect matching of $H$ if and only if $Q'$ is a perfect matching of $H'$. 
Summarizing, 
$$
\Per(T(H,w))= \Per(T(H',w')).
$$ 

Next we show that $T(H',w',z)$ is Kasteleyn: by Theorem \ref{thm.k1} it suffices to observe that the
corresponding three bipartite graphs $G_1, G_2, G_3$ are planar. We first observe that in $H'$, the vertices of $V_i(H')$ which may have degree bigger than two belong to the set $B_i= V_i\cup \{v_k^i; k\neq i\}$. Hence each $G_i$
consists of internally disjoint paths $v_i\rightarrow v_i^4 (v_i\in V_i)$, $v_k^i\rightarrow v_k^4 (v_k\in V)$ and disjoint cycles
(of an even lengths) with no vertex from $B_i$. Hence each $G_i$ is planar.
This finishes the proof of Theorem \ref{thm.maindet}.

\end{proof}

 {\bf Proof of Theorem \ref{thm.main1}.}

It follows from Theorem \ref{thm.mainper}, Theorem \ref{thm.maindet} and Observation \ref{o.contr} that
$$
W_{\C, w}(z)= \det(I'+A'),
$$
where $A'$ is a signing of the adjacency matrix $A(D,t,z)$ and $I'$ is a signing of the identity matrix. Multiplying some 'hyper-rows' of $I'+A'$ by $(-1)$ we obtain the statement of Theorem \ref{thm.main1}.

\section{4D analogue of Bass' theorem}
\label{s.3bt}

We first reformulate the discrete Ihara-Selberg function in a way that is easier to generalise.

\subsection{Back to the discrete Ihara-Selberg function}
\label{sub.2}
\begin{definition}
\label{2-basic}
Let $D= (V,A)$ be a directed graph. Let $\A$ be a multiset of arcs, where each arc of $D$ appears infinitely many times.
A {\em vertex connector} in $D$ is a triple $(e_1, e_2, v)$ where $v$ is a vertex and $e_1, e_2$ are elements of $\A$ such that
$e_1$ terminates in $v$ and $e_2$ starts in $v$.

A {\em 2-circuit} of $D$ is a pair $(S, C)$ where $S\subset \A$ is a multi-set of arcs and $C$ is a set of vertex connectors so that
\begin{enumerate}
\item 
Each arc of $S$ is the entering arc of exactly one vertex connector of $C$ and also the leaving arc of exactly one vertex connector of $C$.
as t,
\item 
Each arc of each vertex connector of $C$ belongs to $S$,
\item
The digraph induced by the arcs of $S$ is weakly connected,
\item 
Each vertex of $V$ is in at most one vertex connector.
\end{enumerate}

A {\em 2-circulation} in $D$ is a pair $(S, C)$ satisfying (1), (2), (3) and 
\begin{enumerate}
\item[(5)]
It is not possible to write $S$ as disjoint union of $S_1, S_2$ and $C$ as disjoint union of $C_1, C_2$ such that both $(S_i,C_i), i= 1,2$, are 2-circulations.
\end{enumerate}
\end{definition}

 The following observation is straightforward.

\begin{observation}
\label{o.1}
Directed cycles of $D$ are exactly 2-circuits and closed walks of $D$ are exactly  2-circulations.  
\end{observation}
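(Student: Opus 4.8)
The plan is to make the permutation hidden in condition (1) the central device and to read off closed walks as its orbits. Given any pair $(S,C)$ satisfying (1)--(3), I would define a map $\sigma\colon S\to S$ by setting $\sigma(e)=e_2$, where $(e,e_2,v)$ is the unique connector of $C$ whose entering arc is $e$; since each arc of $S$ is also the leaving arc of exactly one connector, $\sigma$ is a bijection. Consecutive arcs $e,\sigma(e)$ meet at the vertex of their shared connector, so reading the arcs of an orbit of $\sigma$ in cyclic order spells out a rooted closed walk of $D$. Conversely a rooted closed walk $a_1,\dots,a_n$ yields $S=\{a_1,\dots,a_n\}$ (lifted to distinct copies in $\A$) together with $C=\{(a_i,a_{i+1},v_i):1\le i\le n\}$, indices taken mod $n$; here the infinite multiplicity of $\A$ is what keeps $\sigma$ well defined when the walk repeats an arc. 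Since cyclic rotation of the walk leaves $(S,C)$ unchanged, this matches the definition of a closed walk as a rotation class.

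For the closed-walk/2-circulation half I would first note that a rooted closed walk gives a pair satisfying (1)--(3): conditions (1) and (2) are immediate from consecutiveness, and (3) holds because a walk is connected. The crux is a short \emph{orbit lemma}: any subpair $(S',C')\subseteq(S,C)$ that itself satisfies (1) and (2) has $S'$ closed under $\sigma$, hence equal to a union of $\sigma$-orbits. Indeed (1) forces the unique entering connector of each $e\in S'$, namely its connector in $C$, to lie in $C'$, and then (2) forces $\sigma(e)\in S'$. Consequently $(S,C)$ can be written as a disjoint union of two smaller valid pairs precisely when $\sigma$ has at least two orbits, so the irreducibility in (5) holds exactly when $\sigma$ is a single cycle, i.e.\ when $(S,C)$ is a single closed walk.

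For the directed-cycle/2-circuit half I keep (1)--(3) and add (4). Condition (4) says each vertex of $V$ is the middle vertex of at most one connector, i.e.\ is traversed at most once; this forces distinct $\sigma$-orbits to be vertex-disjoint and each orbit to revisit no vertex. Weak connectivity (3) then rules out two vertex-disjoint orbits, so $\sigma$ has a single orbit meeting each vertex at most once, which is exactly a directed cycle. Conversely a directed cycle visits each vertex once, so each vertex hosts at most one connector and (4) holds, the remaining conditions being verified as above.

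The main obstacle is pinning down condition (5). Taken literally, with ``2-circulations'' recurring on both pieces, the condition fails to exclude a pair whose $\sigma$ has three or more orbits, since such a pair cannot be split into exactly two single-orbit pieces; one therefore has to read the irreducibility with respect to subpairs merely satisfying (1)--(3), which the orbit lemma shows is the well-founded and intended reading. Establishing that lemma carefully, and checking that every decomposition in (5) respects the connector incidences, is the delicate point; everything else is bookkeeping over the copies in $\A$.
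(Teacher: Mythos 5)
Your argument is correct, and it is worth noting at the outset that the paper offers no proof of Observation \ref{o.1} at all: it is asserted as ``straightforward.'' Your formalization via the bijection $\sigma$ on $S$ (each arc sent to the leaving arc of its unique entering connector, which exists and is unique by (1), and whose image lies in $S$ by (2)) is exactly the bookkeeping the definition is designed to encode, and reading closed walks off as $\sigma$-orbits, with the infinite multiplicity of $\A$ absorbing repeated arcs and the unordered pair $(S,C)$ absorbing the rotation equivalence, is the natural and evidently intended argument. Your orbit lemma --- that any subpair of $(S,C)$ satisfying (1) and (2) is $\sigma$-closed, hence a union of orbits --- is proved correctly, and the derivation of the 2-circuit half from (3) and (4) (connector-disjointness of distinct orbits forces a single orbit, and (4) forces that orbit to visit each vertex once) is sound.

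Two remarks, neither of which is a gap in your reasoning. First, your criticism of condition (5) is a genuine imprecision in the paper's definition, not a defect of your proof: read literally, a weakly connected pair whose $\sigma$ has three or more orbits cannot be written as a disjoint union of \emph{two} 2-circulations (one of the parts would itself be decomposable) and so satisfies (5) vacuously; the reading you adopt --- indecomposability into subpairs satisfying (1)--(3), equivalently ``$\sigma$ has a single orbit'' --- is the one required for both Observation \ref{o.1} and Observation \ref{o.2} to hold, and is clearly what is meant. Second, under that reading you should still justify the ``only if'' direction of your equivalence: when $\sigma$ has $m\geq 2$ orbits and $O(S)$ is weakly connected, a split into two parts \emph{each weakly connected} exists because the intersection graph of the orbits is connected, so one may detach a leaf of one of its spanning trees. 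This one-line addition closes the only step you leave implicit.
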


The following reformulation will be also helpful: given a digraph $D$ we define a new digraph so that its vertices are all the vertex connectors, and there is arc $a'= (c,c')$ for each arc $a\in \A$
 such that $a$ belongs to $c$ as leaving arc, and $a$ belongs to $c'$ as entering arc. We call the directed cycles of this new digraph {\em connector cycles} of $D$.

If $(S,C)$ is a 2-circulation then $m(S,C)$ will denote the number of the connector cycles of $(S,C)$. We note that this is only a formal definition which prepares ground for a 4-dimensional generalization since, due to the following straightforward observation, $m(S,C)= 1$ always.
 
\begin{observation}
\label{o.2}
Connector cycles of $D$ are exactly 2-circulations of $D$.

\end{observation}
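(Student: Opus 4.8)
The final statement to prove is Observation \ref{o.2}: \emph{Connector cycles of $D$ are exactly 2-circulations of $D$.}

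\medskip

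The plan is to prove the two inclusions separately, using the auxiliary digraph defined just before the statement, whose vertices are the vertex connectors and whose arcs are the elements $a' = (c,c')$ recording that an arc $a \in \A$ leaves $c$ and enters $c'$. First I would unpack what a connector cycle is in terms of the underlying data: a directed cycle in this auxiliary digraph is a sequence of vertex connectors $c_1, c_2, \ldots, c_m$ together with arcs $a_1, \ldots, a_m$ of $\A$ such that $a_i$ leaves $c_i$ and enters $c_{i+1}$ (indices mod $m$). I would then read off from this the pair $(S,C)$ with $S = \{a_1, \ldots, a_m\}$ and $C = \{c_1, \ldots, c_m\}$, and verify that it satisfies conditions (1), (2), (3) of Definition \ref{2-basic}. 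Condition (1) holds because in the auxiliary cycle each connector $c_i$ has exactly one incoming and one outgoing auxiliary arc, so each $a_i$ is the entering arc of exactly one connector and the leaving arc of exactly one connector. Condition (2) is immediate since every $a_i$ is by construction part of some connector in $C$. Condition (3), weak connectivity, follows because the $c_i$ are chained together along a single cycle.

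\medskip

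The more delicate direction is condition (5), i.e.\ showing that a connector cycle yields a 2-circulation (not just a 2-circuit) and that conversely every 2-circulation arises this way. For the forward direction I would argue that a single directed cycle in the auxiliary digraph cannot be split as a disjoint union $(S_1,C_1) \sqcup (S_2,C_2)$ of two 2-circulations: any such splitting of the arc-set and connector-set would decompose the auxiliary cycle into two arc-disjoint closed sub-walks, but a directed cycle is by definition aperiodic/indecomposable in this sense, so no nontrivial splitting into two pieces each satisfying (1)–(3) exists. For the converse, given a 2-circulation $(S,C)$, I would use conditions (1) and (2) to build the auxiliary digraph restricted to $C$: condition (1) guarantees that in this restriction every connector has exactly one entering and one leaving auxiliary arc, so the restriction is a disjoint union of directed cycles, and condition (3) (weak connectivity) forces it to be a \emph{single} auxiliary cycle. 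Finally condition (5) must be invoked to rule out that this single connected object is in fact decomposable; here the key point is that a connected in-degree-one out-degree-one auxiliary digraph is precisely a single directed cycle, which is exactly a connector cycle.

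\medskip

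I expect the main obstacle to be the careful bookkeeping around condition (5) and the fact that $\A$ is a \emph{multiset}: arcs may repeat, and the same underlying arc of $D$ can participate in several connectors, so I must track connectors and arc-occurrences rather than arcs of $D$ themselves. The cleanest formulation is likely to show that conditions (1)–(3) already force the auxiliary digraph on $C$ to be a disjoint union of directed cycles that is connected, hence a single cycle, and that condition (5) is then automatically equivalent to this single-cycle (indecomposability) property. Because the statement is flagged as \emph{straightforward} and serves only to motivate the later 4-dimensional generalization, I would keep the argument short, emphasizing the correspondence between the in/out-degree-one structure enforced by (1) and the directed-cycle structure of the auxiliary digraph, rather than belaboring the multiset technicalities.
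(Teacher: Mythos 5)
The paper supplies no proof of Observation~\ref{o.2} --- it is asserted as straightforward --- and your argument is the intended one: translate conditions (1)--(3) and (5) of Definition~\ref{2-basic} into in-/out-degree and connectivity statements about the auxiliary connector digraph. Both inclusions go through essentially as you describe, and your handling of the multiset caveat (distinct occurrences in $\A$ must be used so that condition (1) is not violated by a repeated leaving or entering arc) is adequate.

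One step is misattributed, though, and taken literally it is false: you claim that condition (3), weak connectivity of the digraph induced by the arcs of $S$, forces the auxiliary digraph on $C$ to be a \emph{single} directed cycle. Condition (3) lives in $D$, on the vertex set $V$, not in the connector digraph, and the two notions of connectivity do not coincide. Consider a figure-eight: two directed cycles of $D$ through a common vertex $v$, with two separate vertex connectors at $v$, each pairing the entering and leaving arcs of one of the two cycles. The arc-induced digraph is weakly connected, so (1)--(3) hold, yet the auxiliary digraph splits into two disjoint connector cycles. What excludes this configuration is condition (5): each auxiliary cycle, together with its arcs and connectors, already satisfies (1)--(3), so two or more auxiliary cycles would yield a forbidden decomposition of $(S,C)$. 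Your closing formulation --- that (1) and (2) give a disjoint union of auxiliary cycles and that (5) is then equivalent to there being exactly one --- is the correct one; simply drop the intermediate appeal to (3) for the single-cycle claim, and the proof is complete.
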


Next we need to define {\em periodic 2-circulations}. 
 We say that a 2-circulation $(S,C)$ is {\em periodic} if there is $k>1$ and partitions
$S= S_1\cup \ldots \cup S_k$ and $C= C_1\cup \ldots, \cup C_k$ so that, after identifying the different copies of each arc of $D$,  the pairs $(S_i, C_i), i=1, \ldots, k$ are all equal to the same 2-circulation. A 2-circulation is {\em aperiodic} if it is not periodic.

We denote by $\Delta(D)$ the set of all aperiodic 2-circulations of $D$. 
Using these notions, the discrete Ihara-Selberg function can be expressed as follows:

\begin{observation}
\label{o.cll}
Let $D= (V,A)$ be a directed graph with no loops and let $x= (x_a)_{a\in A}$ be the vector of variables. Then
$$
IS_D(x)= 
\prod_{(S,C)\in \Delta(D)} (1+ (-1)^{m(S,C)} \prod_{a\in S}x_a).
$$ 

\end{observation}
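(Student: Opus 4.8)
The plan is to show that the product on the right-hand side is nothing but the product of Definition \ref{def.isf}, rewritten through the dictionary supplied by Observations \ref{o.1} and \ref{o.2}; so the whole argument is a matter of identifying index sets and evaluating one sign. First I would make the correspondence of Observation \ref{o.1} explicit: to an aperiodic closed walk $W = a_1, \ldots, a_n$ I assign the pair $(S,C)$ whose arc multiset $S$ consists of $a_1, \ldots, a_n$ (with multiplicities) and whose connectors record, at each step, the consecutive pair $(a_i, a_{i+1})$ at the vertex joining them. Observation \ref{o.1} states that closed walks are exactly 2-circulations, so this assignment is a bijection; the point I would emphasize is that under it $S = A(W)$ as multisets, whence $\prod_{a\in S} x_a = \prod_{a\in A(W)} x_a$.

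Second, I would check that the two notions of aperiodicity match, so that the two index sets coincide. A closed walk is a power of a shorter walk $T$ precisely when its arc sequence is a $k$-fold concatenation of that of $T$; on the 2-circulation side this is exactly a splitting $S = S_1\cup \cdots \cup S_k$, $C = C_1\cup\cdots\cup C_k$ into $k$ copies of one smaller 2-circulation after identifying the copies of each arc, which is the definition of periodic given just before the statement. Hence aperiodic closed walks correspond bijectively to aperiodic 2-circulations, and the new set $\Delta(D)$ is identified with the old one.

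Third, I would evaluate the sign using Observation \ref{o.2}: since the connector cycles of $D$ are exactly its 2-circulations, each 2-circulation decomposes into a single connector cycle, so $m(S,C) = 1$ and $(-1)^{m(S,C)} = -1$ for every $(S,C) \in \Delta(D)$. Substituting this together with $\prod_{a\in S}x_a = \prod_{a\in A(W)}x_a$ into the factor $1 + (-1)^{m(S,C)}\prod_{a\in S}x_a$ turns it into $1 - \prod_{a\in A(W)}x_a$, exactly the factor of Definition \ref{def.isf}. Taking the formal product over the common index set then gives $IS_D(x)$.

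I expect the only real obstacle to be bookkeeping rather than ideas. One must be careful that the bijection of Observation \ref{o.1} preserves arc multiplicities, since a single arc of $D$ may appear several times in $S$, matching the several occurrences in $W$ drawn from the multiset $\A$ of Definition \ref{2-basic}; and one must phrase the equivalence of the two periodicity conditions at the level of multisets of arcs and connectors, after identifying the infinitely many copies of each arc. Granting Observations \ref{o.1} and \ref{o.2}, which are asserted to be straightforward, the statement then follows by direct substitution.
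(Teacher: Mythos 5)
Your proposal is correct and follows exactly the route the paper intends: the paper offers no separate proof of Observation \ref{o.cll}, treating it as immediate from Observation \ref{o.1} (identifying aperiodic closed walks with aperiodic 2-circulations, with $S=A(W)$ as multisets) together with the remark preceding Observation \ref{o.2} that $m(S,C)=1$ always, so each factor $1+(-1)^{m(S,C)}\prod_{a\in S}x_a$ reduces to $1-\prod_{a\in A(W)}x_a$. Your additional care about multiplicities and about matching the two notions of aperiodicity is exactly the bookkeeping the paper leaves implicit.
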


Next we generalize the introduced concepts to 4 dimensions.

\subsection{Directed 4-hypergraphs}
\label{sub.3d}
Let $D=(V,A)$ be a directed 4-hypergraph and let $x= (x_a)_{a\in A}$ be a vector of variables associated with the edges of $D$. We will assume that the set of vertices of $D$ is {\em  linearly ordered}.

\begin{definition}
\label{def.or}
Let $a= (a_1, a_2, a_3, a_4)\in A$. We say that each $a_i$ is a vertex of $a$. We introduce new vertex $v(a)$ called {\em edge-vertex}, and four colored arcs: white $(v(a),a_1)$,  red $(v(a),a_2)$ and green $(v(a),a_3)$ start in $v(a)$ and blue $(a_4,v(a))$ enters $v(a)$. If $v$ is a vertex of a directed  hyperedge $a$ then the colored arc between $v(a)$ and $v$ will be denoted by $a(a,v)$.
\end{definition}

\begin{definition}
\label{def.3-basic}
Let $D= (V,A)$ be a directed  4-hyperraph and let $\A$ be the multiset of directed hyperedges of $D$ where each element of $D$ appears infinitely many times, with the same vertices. We usually denote the elements of $A$ by $a$ and the elements of $\A$ by $h$ or by $r$.
\begin{enumerate}
\item[(1)]
Let $S\subset \A$. Let $V(S)\subset V$ denote the set of vertices contained in at least one element of $S$.
We let $O(S)$ be the directed graph with the vertex-set $V(S)\cup \{v(h): h\in S\}$.  The arcs of $O(S)$ are exactly the colored arcs of the elements of $S$.
\item[(2)]
A {\em connector} in $D$ is a 5-tuple $(h_1, h_2, h_3, h_4, v)$ where $v\in V$ and  $h_1, h_2, h_3, h_4$ are directed
hyperedges of $\A$ incident with $v$ and such that $a(h_1,v)$ is white, $a(h_2,v)$ is red, $a(h_3,v)$ is green and $a(h_4,v)$ is blue.
Hence $a(h_4,v)$ leaves $v$ and the other three arcs enter $v$.
\end{enumerate}
\end{definition}

\begin{definition}
\label{def.4cc}
A {\em 4-circuit} of $D$ is a pair $(S, C)$ where $S\subset \A$ and $C$ is a {\em finite} set of connectors so that
\begin{enumerate}
\item 
For each $h=(v_1, v_2, v_3, v_4)\in S$ and $i\in \{1,2,3,4\}$ there is exactly one connector $c= (h_1, h_2, h_3, h_4, v)$ of $C$ such that $h= h_i$; moreover, if $h= h_i$ for $h_i$ of connector $c$ then $v= v_i$.
\item 
Each hyperedge of each connector of $C$ belongs to $S$,
\item
The digraph $O(S)$ induced by the arcs of the directed hyperedges of $S$ is weakly connected,
\item 
Each vertex of $V$ is in at most one connector.
\end{enumerate}

A {\em 4-circulation} in $D$ is a pair $(S, C)$  where $S\subset \A$ and $C$ is a {\em finite} set of connectors
 satisfying (1), (2), (3) and 
\begin{enumerate}
\item[(5)]
It is not possible to write $S$ as disjoint union of $S_1, S_2$ and $C$ as disjoint union of $C_1, C_2$ such that both $(S_i,C_i)$ are 4-circulation.
\item[(6)]
In addition we require that each 4-circulation $(S,C)$ satisfies the following {\em feasibility property}:  Let $v$ be a vertex of $V(S)$ and let $a$ be a blue arc of $S$ starting in $v$. Let $(V_a, O_a)$ be a subdigraph of $O(S)$
which is {\em minimal with respect to inclusion} with the properties: (1) $O_a$ contains arc $a$, (2) each edge-vertex $v(h)\in V_a$ is incident with one arc of $O_a$ of each of the four colors, (3) let $v'> v$ and a colored arc $a'$ of $O_a$ is incident with $v'$. Then all 4 colored arcs of the connector containing $a'$ belong to $O_a$, (4) let $v'< v$ and a {\em white, red or green} arc $a''$ of $O_a$ enters $v'$. Then the (three) white, red and green arcs of the connector containing $a''$ belong to $O_a$. Moreover the number of blue arcs of $O_a$ incident with $v'$ is equal to the number of white arcs of $O_a$ incident with $v'$. 

If each $v'< v$ is incident with at most one blue arc of $O_a$ then we require: 
\begin{itemize}
\item
$a$ is the only blue arc of $O_a$ incident with $v$.
\item
 Let $a_w$ be a white arc of $O_a$ incident with $v$, let $a_r$ be a red arc of $O_a$ incident with $v$ and let $a_g$ be a green arc of $O_a$ incident with $v$.
 Then we require that all three directed hyperdges whose edge-vertices are incident with $a_w, a_r, a_g$ belong to the same connector of $C$. We remark that this connector may be different from the connector of arc $a$.
\end{itemize}
\end{enumerate}
\end{definition}



Next, we define {\em periodic 4-circulations}. 
\begin{definition}
\label{def.perr}
We say that 4-circulation $(S,C)$ is {\em periodic} if there is $k>1$ and partitions
$S= S_1\cup \ldots S_k$, $C= C_1\cup \ldots, C_k$ so that, after identifying the different copies of each element of $\A$,  the pairs $(S_i, C_i), i=1, \ldots, k$ are all equal to the same 4-circulation. A 4-circulation is {\em aperiodic} if it is not periodic.
\end{definition}

We denote by $\Delta(D)$ the set of all aperiodic 4-circulations of $D$. 
Next we define {\em connector cycles}, as in 2D case above.

\begin{definition}
\label{def.3sign}
Let $D= (V,A)$ be a directed 4-hypergraph and let $\A$ be as above the multiset of directed hyperedges of $D$. We define a new digraph so that its vertex-set consists of all edge-vertices $v(h)$, $h\in \A$, and of all the connectors of $D$. There is an arc between connector $c$ and edge-vertex $v(h)$ if and only if  $a(v,h)$ is a colored arc of $c$; this arc will have the same orientation and color as $a(v,h)$.

We define
the {\em white}, ({\em red}, {\em green} respectively) {\em connector cycles} as directed cycles of this new digraph that contain alternately blue and white (red, green respectively) arcs.
 If $z$ is a connector cycle then we let 
$$
sign(z)=  (-1)^{|z|/2-1}.
$$
\end{definition}

Next, we turn our attention to 4D-matrices. Corollary \ref{p.3det1} below generalizes equation (\ref{eq.d}) from Introduction.

\begin{proposition}
\label{p.3det}
Let $A(D,x)$ be the adjacency 4-matrix of a directed 4-hypergraph $D= (V,A)$ with edge variables $x= (x_a)_{a\in A}$.
$$
\det(A(D,x))= \sum_{P= \{c_1, \ldots,  c_k\}} \prod_isign(c_i)\prod_{a\text{ hyperedge of } c_i}x_a,
$$
where $P$ ranges over all sets of vertex-disjoint 4-circuits $c_i$ which contain directed hyperedges of $A$ only and cover all vertices of $D$. Moreover $sign(c_i)= \prod_x sign(x)$, where
$x$ ranges over all connector cycles of $c_i$.
\end{proposition}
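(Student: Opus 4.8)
The plan is to expand $\det(A(D,x))$ directly from the definition of the determinant of a $4$-matrix and then organize the nonzero terms geometrically. Writing the index set as $V^4$ with the fourth coordinate playing the role of the base $V_{k+1}$, the term indexed by permutations $\si_1,\si_2,\si_3$ of $V$ is
$$
\sign(\si_1)\sign(\si_2)\sign(\si_3)\prod_{i\in V} A(D,x)_{(\si_1(i),\si_2(i),\si_3(i),i)},
$$
and this is nonzero precisely when $h_i:=(\si_1(i),\si_2(i),\si_3(i),i)$ is a directed hyperedge of $A$ for every $i\in V$. Thus the nonzero terms are in bijection with triples of permutations whose associated set $S=\{h_i:i\in V\}$ consists of hyperedges of $A$, and the associated monomial is $\prod_{i\in V}x_{h_i}=\prod_{h\in S}x_h$.

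First I would set up the correspondence between such triples and vertex-disjoint families of $4$-circuits. Given $(\si_1,\si_2,\si_3)$, at each vertex $v\in V$ I assign the connector
$$
c_v=\bigl(h_{\si_1^{-1}(v)},\,h_{\si_2^{-1}(v)},\,h_{\si_3^{-1}(v)},\,h_v,\,v\bigr),
$$
whose white, red, green and blue hyperedges are exactly the unique elements of $S$ using $v$ as white, red, green and blue vertex respectively; this is well defined because each $\si_j$ is a bijection. The pair $(S,C)$ with $C=\{c_v:v\in V\}$ satisfies conditions (1), (2) and (4) of Definition \ref{def.4cc} by construction, and decomposing $O(S)$ into its weakly connected components realizes $(S,C)$ as a disjoint union of $4$-circuits, each of them satisfying (3); since every vertex lies in exactly one connector, the resulting circuits are vertex-disjoint and cover all of $V$. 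Conversely, a vertex-disjoint family of $4$-circuits covering $V$ and using only hyperedges of $A$ recovers, via the white, red, green and blue vertex of each hyperedge, three permutations $\si_1,\si_2,\si_3$, and the two constructions are mutually inverse. The monomials evidently match.

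The remaining and most delicate point is the sign. I would show that
$$
\sign(\si_1)\sign(\si_2)\sign(\si_3)=\prod_{z}\sign(z),
$$
where $z$ runs over all white, red and green connector cycles of the configuration. To do this I would trace a white connector cycle in the auxiliary digraph of Definition \ref{def.3sign}: the blue arc leaving the connector $c_v$ enters the edge-vertex $v(h_v)$, and the white arc leaving $v(h_v)$ enters the connector at the white vertex $\si_1(v)$ of $h_v$. Hence following blue--white arcs realizes the map $v\mapsto\si_1(v)$, so the white connector cycles are in bijection with the cycles of $\si_1$, a cycle on $\ell$ vertices yielding a connector cycle with $\ell$ blue and $\ell$ white arcs, i.e.\ $|z|=2\ell$. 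Then $\sign(z)=(-1)^{|z|/2-1}=(-1)^{\ell-1}$, and multiplying over all cycles gives $\prod_{z\text{ white}}\sign(z)=\sign(\si_1)$; the identical argument for red and green yields $\sign(\si_2)$ and $\sign(\si_3)$. Since each such connector cycle uses only colored arcs of $O(S)$, it stays inside a single weakly connected component, so the product factors as $\prod_i\sign(c_i)$ over the $4$-circuits, giving both assertions of the proposition.

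The main obstacle is the sign bookkeeping: one must verify carefully that the orientations and colors in the auxiliary digraph make the white (resp.\ red, green) connector cycles follow $\si_1$ (resp.\ $\si_2$, $\si_3$) and not their inverses, and that the length--sign relation $\sign(z)=(-1)^{|z|/2-1}$ exactly reproduces the contribution $(-1)^{\ell-1}$ of an $\ell$-cycle to the signature. The combinatorial bijection of the second step is routine once the connectors $c_v$ are seen to be well defined.
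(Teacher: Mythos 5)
Your proposal is correct and follows essentially the same route as the paper's own proof: expand the hyperdeterminant over triples of permutations, identify the cycles of $\si_1,\si_2,\si_3$ with the white, red and green connector cycles so that $\sign(\si_1)\sign(\si_2)\sign(\si_3)=\prod_z\sign(z)$, and then use properties (1), (2), (4) of Definition \ref{def.4cc} together with the weakly connected components of $O(S)$ to split the configuration into vertex-disjoint $4$-circuits covering $V$. Your explicit arc-tracing verification that a blue--white pair realizes $v\mapsto\si_1(v)$ (so an $\ell$-cycle gives $|z|=2\ell$ and $\sign(z)=(-1)^{\ell-1}$) is the same sign bookkeeping the paper asserts, just spelled out in more detail.
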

\begin{proof}
 We study the term of the defining expansion of  $\det(A(D,x))$ corresponding to a triplet of permutations $\alpha_1, \alpha_2$, $\alpha_3$. We find out that the set of the cycles of $\alpha_1$ corresponds to a collection of white connector cycles where each directed hyperedge belongs to $A$ only (we restrict the multiset $\A$ to $A$),  each edge-vertex is in at most one such cycle and each vertex of $V$ is in exactly one connector of these connector cycles. Moreover the sign of $\alpha$ is the product of the signs (defined in  Definition \ref{def.3sign}) of the connector cycles of the collection.

Let $Z$ be the set of collections $z$ of connector cycles satisfying:
\begin{enumerate}
\item  
If edge-vertex $v(a)$
belongs to a connector cycle of $z$ then (1) $a\in A$ and (2) for each color $\in \{$ white, red, blue $\}$, $v(a)$ belongs to unique connector cycle of $z$ of the color. 
\item
Each vertex of $V$ is in exactly one connector of a connector cycle of $z$. 
\item
Each connector of a connector cycle of $z$ belongs to exactly one connector cycle of each color. 
\end{enumerate}

If $z\in Z$ then let $R(z)$ be the set of the directed hyperedges of the elements of $z$ and let $sign(z)$ be the product of the signs of the connector cycles of the elements of $z$.
 It follows that
$$
\det(A(D,x))= \sum_{z\in Z}sign(z)\prod_{a\in R(z)}x_a.
$$

Clearly, the collection of connector cycles of a set of vertex-disjoint 4-circuits which cover all the vertices of $V$ belongs to $Z$.
In order to finish the proof, we argue that each $z\in Z$ can be partitioned into the sets of connector cycles of vertex-disjoint 4-circuits: indeed, the set of directed hyperedges of $z$ and the set of connectors of $z$ satisfy properties (1), (2), (4) of Definition \ref{def.4cc} of a 4-circuit.

\end{proof}

\begin{corollary}
\label{p.3det1}
Let $A(D,x)$ be the adjacency 4-matrix of a directed 4-hypergraph $D= (V,A)$ with edge-variables $x= (x_a)_{a\in A}$. 
$$
\det(I-A(D,x))= \sum_{Q= \{c_1, \ldots,  c_k\}}(-1)^{m(Q)}\prod_i\prod_{a\in c_i}x_a,
$$
where $Q$ ranges over all sets of vertex-disjoint 4-circuits $c_i$ and $m(Q)$ is the total number of connector cycles of $Q$.
\end{corollary}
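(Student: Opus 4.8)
The plan is to recognize the matrix $I-A(D,x)$ itself as the adjacency $4$-matrix of a slightly larger directed $4$-hypergraph and then invoke Proposition \ref{p.3det} with a suitable specialization of the edge variables. Concretely, I would let $D^{+}=(V,A^{+})$ be obtained from $D$ by adjoining, for every vertex $v\in V$, a \emph{loop} $\ell_v=(v,v,v,v)$, and I would treat $I-A(D,x)$ as the adjacency matrix $A(D^{+},y)$ after setting $y_{\ell_v}:=1$ on the loops and $y_a:=-x_a$ on the original hyperedges $a\in A$. Since $D$ has no loops, $A(D,x)_{(v,v,v,v)}=0$, so an entrywise check confirms $A(D^{+},y)=I-A(D,x)$: the diagonal entry at $(v,v,v,v)$ equals $y_{\ell_v}=1=I_{(v,v,v,v)}$, the entry at a non-constant tuple forming a hyperedge $a$ equals $y_a=-x_a$, and all other entries vanish. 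Applying Proposition \ref{p.3det} to $(D^{+},y)$ then writes $\det(I-A(D,x))$ as a sum over all sets $P$ of vertex-disjoint $4$-circuits of $D^{+}$ covering $V$, with weight $\prod_i\sign(c_i)\prod_a y_a$; because the Proposition is a polynomial identity in the edge variables, the specialization $y_a:=-x_a$, $y_{\ell_v}:=1$ is legitimate.

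Next I would read this sum off in terms of $D$ alone. A loop $\ell_v$ is a valid $4$-circuit by itself: its single connector is $(\ell_v,\ell_v,\ell_v,\ell_v,v)$, conditions (1)--(4) of Definition \ref{def.4cc} are immediate, and the component $O(\{\ell_v\})$ consists only of the edge-vertex $v(\ell_v)$ joined to $v$. Weak connectivity (condition (3)) forces any $4$-circuit of $D^{+}$ that uses a loop to be that loop alone, so every non-loop $4$-circuit of $D^{+}$ is a $4$-circuit of $D$. Hence each admissible $P$ decomposes uniquely as $P=Q\cup\{\ell_v:v\in V\setminus V(Q)\}$, where $Q$ is an \emph{arbitrary} vertex-disjoint family of $4$-circuits of $D$ and $V(Q)$ is the set of vertices it covers; this is a bijection between the two index sets. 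For a loop, the blue/white, blue/red and blue/green subdigraphs each reduce to a single $2$-cycle, so $\ell_v$ has exactly three connector cycles, all of length $2$, giving $\sign(\ell_v)=\bigl((-1)^{2/2-1}\bigr)^{3}=1$; together with $y_{\ell_v}=1$ the loops contribute the trivial factor $1$, and in particular the uncovered vertices are invisible in the final formula.

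Finally I would carry out the sign bookkeeping on the genuine circuits. Writing $S_i$ for the multiset of hyperedges of $c_i$, I would note that restricting the connector digraph of $c_i$ to its blue and white arcs makes every edge-vertex and every connector have in- and out-degree one, so the white connector cycles partition the white arcs; summing half-lengths gives $\sum_{x\text{ white}}|x|/2=|S_i|$, and likewise for red and green, whence $\sum_{x}|x|/2=3|S_i|$ over all connector cycles of $c_i$. Therefore
\[
\sign(c_i)=\prod_{x}(-1)^{|x|/2-1}=(-1)^{3|S_i|-m(c_i)}=(-1)^{|S_i|}(-1)^{m(c_i)}.
\]
On the other hand the variable negation contributes $\prod_{a\in c_i}y_a=(-1)^{|S_i|}\prod_{a\in c_i}x_a$, so the two factors $(-1)^{|S_i|}$ cancel and leave $(-1)^{m(c_i)}\prod_{a\in c_i}x_a$. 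Multiplying over the circuits of $Q$ and using $m(Q)=\sum_i m(c_i)$ reproduces exactly $\sum_Q(-1)^{m(Q)}\prod_i\prod_{a\in c_i}x_a$, which is the claim. I expect the main obstacle to be precisely this sign accounting — justifying $\sum_x|x|/2=3|S_i|$ and confirming that the loop factor is genuinely trivial (sign $1$ and weight $1$) — since everything else is a direct application of Proposition \ref{p.3det} and the bookkeeping bijection $P\leftrightarrow Q$.
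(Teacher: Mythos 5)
Your proposal is correct and follows essentially the same route as the paper: apply Proposition \ref{p.3det} to $I-A(D,x)$ and then cancel the sign $(-1)^{r}$ coming from the negated variables against the connector-cycle signs via the identity $\sum_{c}|c|/2=3r$, where $r$ is the number of hyperedges used. The only difference is presentational — you make explicit, via the loops $\ell_v$ and the covering-versus-non-covering bijection, the bookkeeping for the diagonal entries of $I$ that the paper's proof leaves implicit — and your sign computation matches the paper's exactly.
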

\begin{proof}
We use Proposition \ref{p.3det}. It remains to prove that the signs are correct. Each $Q$ contributes $(-1)^{n(Q)}$, where
$n(Q)= \sum_{c\text{ connector cycle of $Q$ }} (|c|/2-1)+ r(Q)$, where $r(Q)$ is the number of directed hyperedges of $Q$.
We further notice that, since each hyperedge of $Q$ contributes 6 to the total length of the connector cycles, we have
$\sum_{c\text{ connector cycle of $Q$ }} |c|/2= 3r(Q)$.

\end{proof}

Now we introduce our 4-dimensional analogue of the discrete Ihara-Selberg function.

\begin{definition}
\label{def.4disf}
Let $D= (V,A)$ be a directed 4-hypergraph  and let $x= (x_a)_{a\in A}$ be the vector of edge-variables. The 4-dimensional discrete Ihara-Selberg function, $4IS_D(x)$, is the following formal product: 
$$
4IS_D(x)= 
\prod_{(S,C)\in \Delta(D)} (1+ (-1)^{m((S,C))}\prod_{h\in S}x_h,
$$
where $m((S,C))$ is the number of connector cycles of $(S,C)$.
\end{definition}

Our 4D analogue of Bass' theorem follows. Its proof is postponed to the last section.
 
\begin{theorem}
\label{thm.b}
Let $D= (V,A)$ be a directed 4-hypergraph and let $x= (x_a)_{a\in A}$ be the vector of edge-variables. Then $\det(I-A(D,x))= 4IS_D(x)$.
\end{theorem}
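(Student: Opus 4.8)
The plan is to regard the determinant side as already computed and to prove that the formal product collapses onto it. By Corollary \ref{p.3det1},
$$\det(I-A(D,x))=\sum_{Q=\{c_1,\dots,c_k\}}(-1)^{m(Q)}\prod_{i}\prod_{a\in c_i}x_a,$$
the sum being over finite sets of vertex-disjoint 4-circuits and $m(Q)$ the total number of connector cycles. Expanding the formal product of Definition \ref{def.4disf} by the definition of a formal product gives
$$4IS_D(x)=\sum_{\mathcal F}\ \prod_{(S,C)\in\mathcal F}(-1)^{m(S,C)}\prod_{h\in S}x_h,$$
where $\mathcal F$ runs over all finite sets of aperiodic 4-circulations. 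Any set of vertex-disjoint 4-circuits is such a family and enters with precisely the sign $(-1)^{m(Q)}$; hence the whole content of the theorem is that the remaining families — those in which two circulations meet in a vertex, or in which some circulation is not a vertex-simple 4-circuit — contribute nothing. In dimension two this cancellation is exactly Bass' theorem (Theorem \ref{thm.ba}, Observation \ref{o.cll}), and I would adapt that proof.

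Concretely, I would pass to the formal logarithm and exploit the factorization of 4-circulations into powers. Writing $\log(1+y)=\sum_{k\ge1}\frac{(-1)^{k+1}}{k}y^{k}$ with $y=(-1)^{m(S,C)}\prod_{h\in S}x_h$ turns $\log 4IS_D(x)$ into a double sum over aperiodic 4-circulations $(S,C)$ and exponents $k\ge1$. Each term carries the monomial $(\prod_{h\in S}x_h)^{k}$, which is exactly the edge-monomial of the $k$-th power of $(S,C)$ in the sense of Definition \ref{def.perr}; since, by aperiodicity, every 4-circulation is the power of a unique aperiodic one, the pair (aperiodic circulation, exponent) reindexes the double sum as a single sum over all 4-circulations. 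This mirrors the classical computation $-\log\det(I-A)=\sum_{m\ge1}\frac1m\,\mathrm{tr}(A^m)=\sum_{r}\frac{1}{|r|}\prod_a x_a$, the sum being over rooted closed walks $r$, which are grouped by their underlying aperiodic walk.

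The crux is then to identify this single sum with $\log\det(I-A(D,x))$, and here the proof genuinely departs from the two-dimensional template. There is no literal trace $\mathrm{tr}(A^m)$ to read off from the hyperdeterminant, so I would instead compute $\log$ of the Corollary \ref{p.3det1} sum directly, through the connected (cluster) expansion of the polymer partition function whose polymers are the vertex-disjoint 4-circuits, and show that this expansion resums exactly into aperiodic 4-circulations and their powers, carrying the connector-cycle sign $(-1)^{m(\cdot)}$. The feasibility property (6) of Definition \ref{def.4cc} is the device that makes this possible: it restricts the local branching of a 4-circulation at each vertex, in a way keyed to the linear order on $V$, so that the admissible 4-circulations are precisely the connected structures produced by the hyperdeterminant expansion of Proposition \ref{p.3det}. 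Verifying that feasibility is respected under taking powers, and that the connector-cycle counts combine with the correct signs (using the relation $\sum_{c}|c|/2=3r$ already invoked in Corollary \ref{p.3det1}), is the main obstacle; this is the step I expect to require the most care.

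Once the two formal logarithms are shown to coincide as power series in the $x_a$, exponentiating yields $\det(I-A(D,x))=4IS_D(x)$. As an alternative to the logarithm I would consider arguing by a sign-reversing involution directly on the expanded product, pairing every family that is not a set of vertex-disjoint 4-circuits with a family of opposite sign; the pairing would be governed by the least vertex (in the order on $V$) at which vertex-simplicity or disjointness first fails — precisely the vertex distinguished by the feasibility condition — so that the two approaches ultimately rest on the same combinatorial mechanism.
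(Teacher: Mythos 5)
Your setup is right---reducing to Corollary \ref{p.3det1} and recognizing that the whole content of the theorem is the cancellation of those finite families of aperiodic 4-circulations that are not sets of vertex-disjoint 4-circuits---but the central step is left unexecuted, and the route you sketch for it faces a real obstacle. You propose to take formal logarithms and match $\log 4IS_D(x)$ with $\log\det(I-A(D,x))$ via a trace-like or cluster expansion. As you yourself note, there is no analogue of $\mathrm{tr}(A^m)$ for the hyperdeterminant, so the identity $-\log\det(I-A)=\sum_m \frac{1}{m}\mathrm{tr}(A^m)$ that drives the $2$D argument has no counterpart here; the ``connected expansion of the polymer partition function whose polymers are the vertex-disjoint 4-circuits'' is precisely the thing one would need to construct and control, and you give no mechanism for doing so, nor for handling the $1/k$ denominators and the signs $(-1)^{m(S,C)}$ that arise when regrouping powers of aperiodic circulations. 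The sentence ``this is the step I expect to require the most care'' marks exactly the gap: everything before it is bookkeeping, and everything after it assumes the cancellation already established.

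The paper's proof supplies the missing mechanism, and it is worth seeing why it is not a global involution or a logarithm computation. One fixes the linear order on $V$ and interpolates between the two sides through the sets $U(T)$ of families in which every vertex of $T$ lies in at most one connector; Proposition \ref{p.n2} shows one may enlarge $T$ by one vertex $t$ at a time without changing the signed sum. The engine of that step is a decomposition (Procedure P together with Steps 1 and 2) of each circulation meeting $t$ more than once into ``stones,'' each containing exactly one connector at $t$; the feasibility property (6) of Definition \ref{def.4cc} is what makes Procedure P well defined (it forces the white, red and green arcs reached from a blue arc at $t$ to lie in a common connector), and Proposition \ref{p.nn} shows that arbitrary circular arrangements of stones reassemble into aperiodic 4-circulations. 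The cancellation then comes from Sherman's Lemma on coin arrangements applied to these circular arrangements, with the sign bookkeeping of Observation \ref{o.mid}. Your alternative suggestion of a sign-reversing involution keyed to the least offending vertex points in the right direction---the least-vertex localization is indeed how the paper proceeds---but the coin-arrangement identity is a signed enumeration of circular arrangements, not a pairing, and without the stone decomposition and Proposition \ref{p.nn} you have no objects on which to run either argument.
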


\subsection{Proof of Theorem \ref{thm.main}}
\label{sub.pm}
It follows from Theorem \ref{thm.main1} that 
$$
W_{\C, w}(z)= \pm\det(I+A'),
$$
where $A'$ is a {\em signing} of the adjacency matrix $A(D,t,z)$.
We can write  $W_{\C, w}(z)= \pm\det(I-A'')$, where $A''= -A'$ is (another) signing of $A(D,t,z)$. Due to
Theorem \ref{thm.b},
$$
W_{\C, w}(z)= \pm4IS_D(x)|_{x_a:= \pm z^{t(a)}}, 
$$
where the sign of $z^{t(a)}$ is given by the sign of $A''_a$.

\section{Proof of 4D analoque of Bass' theorem.}
\label{s.proof}

We will use ideas of \cite{S} and \cite{FL}.
Let $(S,C)$ be a 4-circulation of $D$ and let $c$ be a connector of $C$. Let $h\in \A$ be a hyperedge of $c$ and let $v$ be the vertex of $c$. By property (1) of Definition \ref{def.4cc} colored arc $a(h,v)$ uniquely determines connector $c$ of $(S,C)$ and we say that $a(h,v)$ {\em belongs to} connector $c$. We will also sometimes abuse notation and identify a colored arc of a connector cycle with the corresponding colored arc $a(h,v)$.
We also observe that connector cycles of $(S,C)$ of the same color are vertex disjoint.

We denote by $G$ the set of all finite subsets of $\Delta(D)$.
Let $t$ be a vertex of $D$. We denote by $\Delta_t$ the subset of $\Delta(D)$ of all aperiodic 4-circulations which have a connector containing vertex $t$.  Next, we introduce a procedure.

\medskip

 {\bf Procedure  P.} 

INPUT:
An aperiodic 4-circulation $(S,C)\in \Delta_t$ such that: if $v< t$ then $C$ has at most one connector containing $v$.  
A connector $c= (r_1, r_2, r_3, r_4, t)$ of $(S,C)$ and its blue arc $a$; we recall that $r_1, r_2, r_3, r_4$ are directed hyperedges of $D$ incident with $t$ and such that arc $a(r_1,t)$ is white, $a(r_2,t)$ is red, $a(r_3,t)$ is green and $a(r_4,t)= a$ is blue. Arc $a$ leaves $t$ and the other three colored arcs enter $t$.

We recall set $O_a$ from the feasibility property (Definition \ref{def.4cc}). Let $S_a$ be the set of directed hyperedges of $S$ whose edge-vertex is incident with at least one arc of $O_a$ and let $C_a$ be the set of the connectors which contain at least one arc of $O_a$. 

\medskip

{\bf Claim 1.} {\em $O_a$ contains 
 a white arc $a_w$, a red arc $a_r$ and a green arc $a_g$ which enter vertex $t$.}
\newline
 We observe that $O_a$ has all the arcs of the segment of the white (red, green respectively) connector cycle starting by $a$ and ending at $v$ (due to the defining propoerties of $O_a$). This proves Claim 1. 

\medskip

 By the feasibility property of 4-circulations, see Definition \ref{def.4cc},
we have that the three hyperedges of colored arcs $a_w, a_r, a_g$ of Claim 1  belong to the same connector of $(S, C)$. Let us denote this connector by $c'$.  

OUTPUT: Connector $c'$.
This finishes a description of procedure {\bf P}. 

\medskip

Next, we decompose $(S,C)\in \Delta_t$ into aperiodic 4-circulations $s_i= (S_i, C_i), i= 1, \ldots, k$ ($k$ depends on $(S,C)$) such that each $s_i$ contains unique connector whose vertex is $t$. The decomposition is constructed in two steps.
\medskip

{\bf STEP 1.} 
\newline
We start with a connector $c= c^1= (r^1_1, r^1_2, r^1_3, r^1_4, t)$. Let $a^1$ denote the blue edge of $c^1$. We let $i:= 1$.
\newline
Until STOP do:
We assume that pairs $(c^j,a_j), j\leq i$ are already constructed. 
We apply procedure ${\bf P}$ to $(c^i, a_i)$. If the output $c^{i'}$ satisfies $c^{i'}= c^1$ then we let $k= i$ and we return STOP. Otherwise
we let $c^{i'}= c^{i+1}$, we denote the blue arc of $c^{i+1}$ by $a_{i+1}$, and we update $i:= i+1$.
\newline
END of STEP 1.
We note that STEP 1 ends with $k$ at most the number of connectors of $(S,C)$ containing vertex $t$.

\medskip

{\bf Claim 2.} {\em $S_{a_i}\cap S_{a_j}= \emptyset$ for $i\neq j$ and $\cup_{i=1}^kS_{a_i}= S$.}
\newline
The second part holds due to property (5) of Definition \ref{def.4cc}. 
The first part follows if we show $O_{a_i}\cap O_{a_j}= \emptyset$: for a contradiction let $O= O_{a_i}\cap O_{a_j}\neq  \emptyset$. We observe that $a_i\notin O$ by the feasibility property for $a_j$. Then $O_{a_i}\setminus O$ satisfies conditions (1), (2), (3) of (6) of Definition \ref{def.4cc} which contradicts the minimality of $O_{a_i}$. 

\medskip 

{\bf STEP 2.}
\newline
We define pairs $s_i= (S_i,C_i)$ using pairs $(c^i, a_i)$ from STEP 1: 
\newline
For each $i\leq k$ let $S_i= S_{a_i}$.  For $i< k$ let $C_i= C_{a_i}\setminus\{c^i, c^{i+1}\}\cup
\{(r^{i+1}_1, r^{i+1}_2, r^{i+1}_3, r^i_4, t)\}$, and let $C_k= C_{a_k}\setminus\{c^k, c^1\}\cup \{(r^1_1, r^1_2, r^1_3, r^k_4, t)\}$. 
\newline
END of STEP 2.

\medskip

This finishes a description of the decomposition.  
The pairs $s_1, \ldots, s_k$ from the above construction will be called {\em stones} of $(S,C)$. We note that the decomposition into stones and the cyclic order of the stones induced by the construction is independent on the choice of connector $c^1$ we started with.  

 Each $s_i= (S_i, C_i), i= 1, \ldots, k$, is aperiodic since there is exactly one connector containing vertex $t$. We still need to show that each $s_i$ is a 4-circulation. We prove this in a more general setting in Proposition \ref{p.nn}. We also observe:

\begin{observation}
\label{o.mid}
$m((S,C))= [\sum_{i=1}^km((S_i, C_i))]- 3(k-1)$.
\end{observation}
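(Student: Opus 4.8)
The plan is to track how the connector cycles of $(S,C)$ are redistributed among the stones, using the colour bookkeeping of Definition \ref{def.3sign}. First I would recall the structure of the auxiliary digraph carrying the connector cycles: every edge-vertex $v(h)$, $h\in S$, has a single incoming blue arc and three outgoing arcs (white, red, green), while every connector has a single outgoing blue arc and three incoming coloured arcs. Consequently each blue arc lies on exactly one white, one red and one green connector cycle, so $m((S,C))$ is the total number of white plus red plus green cycles. The first step is to establish that the connectors of $(S,C)$ incident with $t$ are \emph{exactly} $c^1,\dots,c^k$: any further connector at $t$ would lie on connector cycles disjoint from those traced by Procedure $\textbf{P}$ out of $c^1$, and would therefore expose a proper sub-4-circulation, contradicting property (5) of Definition \ref{def.4cc} (equivalently, this is the content of Claim 2, $\bigcup_{i=1}^k S_{a_i}=S$).

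Next I would identify precisely which connector cycles of $(S,C)$ meet $t$. By STEP 1, Procedure $\textbf{P}$ applied to $(c^i,a_i)$ follows the white connector cycle out of the blue arc $a_i$ and returns to $t$ along the first white arc reaching $t$, which by the feasibility property belongs to $c^{i+1}$; the returning red and green arcs belong to the same $c^{i+1}$. Since the minimality of $O_{a_i}$ stops the white path at its first return to $t$, the white cycle through $t$ visits $c^1,c^2,\dots,c^k$ consecutively and closes up (the STOP condition being $c^{k+1}=c^1$), and likewise there is a single red and a single green cycle through $t$. Writing $P^W_i$ (respectively $P^R_i$, $P^G_i$) for the segment from the blue arc $a_i$ at $c^i$ to the white (respectively red, green) arc at $c^{i+1}$, the three cycles meeting $t$ are the concatenations $P^W_1\cdots P^W_k$, $P^R_1\cdots P^R_k$ and $P^G_1\cdots P^G_k$. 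Thus exactly three connector cycles of $(S,C)$ are incident with $t$.

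The heart of the argument is then the comparison with the stones. In $s_i=(S_i,C_i)$ the unique new connector $(r^{i+1}_1,r^{i+1}_2,r^{i+1}_3,r^i_4,t)$ attaches the blue arc $a_i$ to the white, red and green arcs of $c^{i+1}$, so it closes each open segment $P^W_i$, $P^R_i$, $P^G_i$ into a connector cycle of $s_i$; this contributes three cycles per stone, hence $3k$ in total. For the remaining cycles I would argue that any connector cycle of $(S,C)$ avoiding $t$ lies entirely inside one $O_{a_i}$: by Claim 2 the sets $S_{a_i}$ partition $S$, and by the closure clauses (2)--(4) of the feasibility property each $O_{a_i}$ contains, with each of its edge-vertices, all four incident coloured arcs and, away from $t$, all arcs of every incident connector; such a cycle therefore survives unchanged in exactly one stone, giving a bijection between the $t$-avoiding cycles of $(S,C)$ and the cycles of the stones missing the new connectors. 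Writing $N$ for the number of $t$-avoiding cycles, this yields $\sum_{i=1}^k m((S_i,C_i))=3k+N$ and $m((S,C))=3+N$; subtracting gives $m((S,C))=\sum_{i=1}^k m((S_i,C_i))-3(k-1)$.

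The step I expect to be the main obstacle is the confinement claim, namely that a connector cycle not through $t$ cannot straddle two distinct $O_{a_i}$. This is exactly where the minimality of $O_a$ and the closure clauses (2)--(4) of the feasibility property are needed, together with the disjointness $O_{a_i}\cap O_{a_j}=\emptyset$ from Claim 2. The delicate case is at vertices $v'<t$, where clause (4) forces only the white, red and green arcs of an incident connector into $O_a$ (and balances blue against white arcs) but not the individual blue arc; one must check that a $t$-avoiding cycle never leaves $O_{a_i}$ by entering such a vertex through a blue arc assigned to a different $O_{a_j}$.
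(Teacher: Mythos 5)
Your proof is correct and follows essentially the same route as the paper's: the paper's (much terser) argument likewise observes that for each of the three colours there is exactly one connector cycle of $(S,C)$ through $t$ (containing all the blue arcs $a_1,\dots,a_k$), that these three cycles are replaced by $3k$ cycles, one of each colour per stone, while all $t$-avoiding cycles persist unchanged in a unique stone. Your write-up merely makes explicit the supporting facts (confinement of $t$-avoiding cycles to a single $O_{a_i}$, the segment decomposition $P^W_i,P^R_i,P^G_i$) that the paper leaves implicit.
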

\begin{proof}
In the construction of $(S_i, C_i)$, $1\leq i\leq k$, we replace one connector cycle of $(S,C)$ of each color (white, red, green) containing all blue arcs $a_1, \ldots, a_k$ by $k$ connector cycles of the same color, one from each $(S_i,C_i)$, $i= 1, \ldots, k$.

\end{proof}

Clearly, the circular order $(s_1, \ldots, s_k)$ of stones determines the aperiodic 4-circulation $(S,C)$ we started with by reversing STEP 2 of the above construction. Next we observe that each circular order of a subset of the stones defines, by reversing STEP 2 above, a 4-circulation. 

Let $(s'_1, \ldots, s'_m)$ be an arbitrary circular order of a subset of the stones. Let $s'_i= (S'_i, C'_i)$ and let 
$a'_i$ be the unique blue arc of $s'_i$ incident with $t$. Let us denote $c'_i= (q^i_1, q^i_2, q^i_3, q^i_4, t)$. Hence $a'_i= a(q^i_4, t)$. We let, for $i< m$,  $c''_i= (q^i_1, q^i_2, q^i_3, q^{i+1}_4, t)$ and $c''_m= (q^m_1, q^m_2, q^m_3, q^1_4, t)$. Finally let $S'= \Cup_{i\leq m}S'_i$ and $C'= (\Cup_{i\leq m}C'_i)\setminus \{c'_1, \ldots, c'_m\}\cup \{c''_1, \ldots, c''_m\}$.

\medskip

As an illustration of the above construction we observe: 

\medskip

If $m= 1$ then $(S', C')= (S_j,C_j)$ for some $j\leq k$. If  $(s'_1, \ldots, s'_m)= (s_1, \ldots, s_k)$ then $(S', C')= (S,C)$. In fact, the construction of $(S',C')$ is a generalization of the reverse operation to the decomposition above. 

\medskip

For each $i\leq m$ let $s''_i$ be obtained from $s'_i$ by identification of different copies of the same directed hyperedge $e$ of $A$. Let $\S$ denote the multiset consisting of $s''_1, \ldots, s''_m$.  

We will show:

\begin{proposition}
\label{p.nn}
$(S',C')$ is a 4-circulation. Moreover it is aperiodic (see Definition \ref{def.perr}) if and only if the circular sequence $(s''_1, \ldots, s''_m)$ is an aperiodic circular sequence of elements of $\S$.
\end{proposition}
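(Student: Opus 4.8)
The plan is to verify directly the five defining conditions of a 4-circulation (Definition \ref{def.4cc}) for $(S',C')$ and then to establish the periodicity dichotomy. The crucial structural observation is that the regluing touches \emph{only} the connectors incident with $t$: we delete $c'_1,\dots,c'_m$ and insert $c''_1,\dots,c''_m$, where $c''_i$ retains the white, red and green hyperedges $q^i_1,q^i_2,q^i_3$ of $c'_i$ but takes the blue hyperedge $q^{i+1}_4$ of the cyclically next stone. Hence the multiset of (colored hyperedge, vertex) incidences at $t$ is unchanged; only the pairing of the blue hyperedges with the other three colors is cyclically permuted. Condition (1) then follows, since each hyperedge still occurs as the correctly colored entry of exactly one connector at the correct vertex; condition (2) follows because the hyperedges of $c''_i$ lie in $S'_i\cup S'_{i+1}\subseteq S'$; and condition (3) follows because every stone contains a connector at $t$, so $t\in V(S'_i)$ for all $i$ and the weakly connected graphs $O(S'_i)$ all share $t$ inside $O(S')$.

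The main tool for conditions (5) and (6) is the connector-cycle structure of $(S',C')$. Since each stone $s'_i$ has a unique connector $c'_i$ at $t$, it has a unique white (respectively red, green) connector cycle through $t$; the regluing splices these $m$ monochromatic cycles, in the circular order $s'_1\to s'_2\to\cdots\to s'_m\to s'_1$, into a single white (red, green) connector cycle, while every connector cycle avoiding $t$ stays intact. This is precisely the inverse of the merging recorded in Observation \ref{o.mid}. Condition (5) is then immediate: in any splitting of $(S',C')$ into two 4-circulations, property (1) forces each connector cycle to lie entirely in one part (two consecutive connectors of a monochromatic cycle share a hyperedge, which cannot belong to two disjoint parts), and since the merged white cycle meets the $t$-connector of every stone, all stones lie in the same part, so no proper splitting exists.

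The main obstacle is the feasibility property (6). For a blue arc $a$ with source $v\neq t$ I plan to show that the minimal subdigraph $O_a$ computed inside $O(S')$ coincides with the one computed inside the stone containing $a$, equivalently with its restriction in the original $(S,C)$, because a minimal such $O_a$ never needs the rewired $t$-connectors; here I will use that, by Claim 2, the sets $O_{a_i}$ of the decomposition are pairwise disjoint and partition $O(S)$. Feasibility for these $a$ is then inherited from $(S,C)$, which is a 4-circulation by hypothesis. For blue arcs incident with $t$ the condition must be checked against the new connectors $c''_i$ directly, using the spliced monochromatic cycles above; this is the delicate point, precisely because the regluing alters exactly the $t$-connectors, and it is where the INPUT hypothesis of Procedure P controlling connectors at vertices below $t$ is needed.

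Finally, for the periodicity statement I would argue both implications through the cyclic gluing. If $(s''_1,\dots,s''_m)$ has period $p$ with $m=pk$, I partition $S'$ and $C'$ into the $k$ blocks of $p$ consecutive stones; after identifying the copies of each hyperedge of $\A$, the bridging connector $c''_{jp}$, which links the last stone of block $j$ to the first stone of block $j+1$, becomes the seam connector of the fundamental block, so every block collapses to one and the same 4-circulation, proving $(S',C')$ periodic. Conversely, a period-$k$ decomposition of $(S',C')$ distributes the $m$ blue arcs at $t$ into $k$ equal classes and, read off along the cyclic order in which Procedure P visits them, forces the stone sequence to repeat with period $m/k$, so the circular sequence is periodic. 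The careful point throughout is that the seam connectors $c''_{jp}$ coincide with the fundamental seam only \emph{after} copies are merged, so it is the bookkeeping of these connectors under copy identification that makes the equivalence exact.
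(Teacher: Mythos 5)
Your overall route is the same as the paper's: check conditions (1)--(3) and (5)--(6) of Definition \ref{def.4cc} directly, inherit feasibility from $(S,C)$, and read the periodicity dichotomy off the circular order of the stones. Two points deserve comment. For condition (5) you argue via the spliced monochromatic connector cycles through $t$; this shows that the $t$-connectors of all stones must lie in one part of any putative splitting, but it does not by itself put the \emph{whole} hyperedge set of each stone into that part. The paper closes exactly this step by invoking minimality: the 4-circulation $Z$ containing the hyperedge of $a'_1$ contains the hyperedge of every $a'_i$ (through the chain of new connectors $c''_i$), and then all of $S_{a'_i}$ by minimality of $O_{a'_i}$, hence $Z=(S',C')$. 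You need that second half.

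The genuine gap is in condition (6). You explicitly defer the feasibility check for blue arcs incident with $t$ (and, implicitly, for blue arcs $a$ at other vertices whose set $O_a$ passes through $t$, since clauses (3) and (4) in the definition of $O_a$ reference the connectors at vertices $v'\neq v$ met by $O_a$, and those at $t$ have been rewired). Calling it ``the delicate point'' that ``must be checked directly'' is a plan, not a proof. The paper resolves it with two observations you should make explicit: first, each set $O_a$ computed in $(S',C')$ coincides with the corresponding set $O_a$ in $(S,C)$; second, the only clause of (6) that sees the connector structure at the base vertex itself asks that the hyperedges of the white, red and green arcs there lie in a common connector, and the regluing $c'_i\mapsto c''_i$ preserves every white--red--green triple (only the blue entry is cyclically permuted), so this clause is inherited verbatim from $(S,C)$. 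Your periodicity argument is fine and is in fact more detailed than the paper's, which dismisses that part as immediate from the construction.
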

\begin{proof}

We first observe that  $(S', C')$ satisfies (1), (2), (3) of Definition \ref{def.4cc}. In order to show property (5) we observe that the 4-circulation $Z$ contained in $(S', C')$ which has the hyperedge of arc $a'_1$ contains the hyperedge of each $a'_i, i\leq m$ (by the construction of $(S',C')$). Moreover, $Z$ also contains the hyperedge of each arc of $\cup S_{a'_i}= S'$ (by minimality of $O_{a'_i}$). Hence $Z= (S',C')$.

Hence it remains to observe that (6) of Definition \ref{def.4cc} holds. Let $a$ be a blue arc as in (6) of Definition \ref{def.4cc}. We observe directly from the definition that each set $O_a$ of $(S',C')$ is also set $O_a$ of $(S,C)$;  (6) for $(S',C')$ thus follows from (6) for $(S,C)$.

The second part of the proposition follows directly from the construction of $(S',C')$.

\end{proof}

Let $t$ be a vertex of $D$, let $T\subset V$ be the set of {\em proper predecessors} of $t$ in the fixed linear order of $V$. We denote by $U(T)$ the subset of $G$ consisting of all finite sets $Q$ of aperiodic 4-circulations of $D$ such that each vertex of $T$ appears at most once in a connector of an element of $Q$. In particular $U(\emptyset) = G$. We let
$U(T,t)= \{R\in U(T): \text{ no element of $R$ contains $t$}\}$ and for each $R\in U(T,t)$ we let
$U(T,R)= \{R': R'\cup R \in U(T)\text{ and each element of $R'$ contains $t$}\}$. We note that $\emptyset\in U(T,R)$.
A key ingredient of the proof is the following Lemma on coin arrangements of Shermann \cite{S} (see also \cite{L}).

\begin{lemma}(on coin arrangements)
Suppose we have a collection of $N$ objects of which $m_1$ are of one kind, $m_2$ are of second kind, ..., and $m_k$ are of k-th kind. Let $b_{N,i}= b(N, i; m_1, \ldots, m_k)$ be the number of exhausted unordered arrangements of these objects into $i$ disjoint, nonempty, circularly ordered sets such that no two circular orders are the same and none is periodic. If $N> 1$ then
$$
\sum_{i=1}^N(-1)^ib_{N,i}= 0.
$$ 

\end{lemma}

\begin{proposition}
\label{p.n2}
Let $t$ be a vertex of $D$, let $T\subset V$ be the set of {\em proper predecessors} of $t$ in the fixed linear order of $V$ and let $T'= T\cup\{t\}$.
$$
\sum_{Q= \{(S_1,C_1), \ldots, (S_k,C_k)\}\in U(T)} (-1)^{m(Q)} \prod_{i=1}^k\prod_{a\in S_i}x_a= \sum_{Q= \{(S_1,C_1), \ldots, (S_k,C_k)\}\in U(T')} (-1)^{m(Q)} \prod_{i=1}^k\prod_{a\in S_i}x_a
$$
\end{proposition}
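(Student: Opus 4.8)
The plan is to show that passing from $U(T)$ to $U(T')$ amounts to ``switching on'' the extra constraint that the vertex $t$ appears in at most one connector of each admissible family, and that the terms which get removed (those where $t$ is used more than once) cancel among themselves via the coin-arrangements lemma. Concretely, I would fix a family $R \in U(T,t)$, i.e.\ a finite set of aperiodic 4-circulations in which no element contains $t$ and every proper predecessor of $t$ is used at most once; such an $R$ already satisfies the $U(T')$-constraint. The difference between the two sides of the claimed identity is then organized as a sum over all such $R$, weighted by $(-1)^{m(R)}\prod_{a}x_a$ (the contribution of $R$ itself), times a local correction factor coming from all the ways to adjoin 4-circulations that \emph{do} pass through $t$. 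So the first step is to write both sides as
$$
\sum_{R \in U(T,t)} (-1)^{m(R)}\Big(\prod_{a \in R}x_a\Big)\cdot \Sigma(R),
$$
where on the $U(T')$-side $\Sigma(R)=1$ (only $R'=\emptyset$ is allowed, since any nonempty $R'$ would place $t$ in a connector), and on the $U(T)$-side $\Sigma(R)$ sums over $R' \in U(T,R)$.

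The heart of the argument is then a purely local cancellation: for each fixed $R \in U(T,t)$, I would show that
$$
\sum_{R' \in U(T,R)} (-1)^{m(R')}\prod_{a \in R'}x_a = 1,
$$
so that the extra terms on the $U(T)$-side vanish and the two sides agree. To prove this I use the stone decomposition developed just above the proposition. The key structural input is Proposition \ref{p.nn}: each $R' \in U(T,R)$ consisting of 4-circulations through $t$ decomposes canonically into stones, and \emph{conversely} every aperiodic circular arrangement of a multiset of stones reassembles (by the reverse of STEP~2) into an aperiodic 4-circulation through $t$. Thus $U(T,R)\setminus\{\emptyset\}$ is in bijection with collections of disjoint, nonempty, nonperiodic, pairwise-distinct circular orderings of a fixed multiset of stone-types. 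Observation \ref{o.mid} controls the sign: when $i$ stones are grouped into a single 4-circulation the exponent $m$ drops by $3$ per merge, and since each hyperedge contributes $6$ to the total connector-cycle length (as in the proof of Corollary \ref{p.3det1}), the monomial $\prod_a x_a$ and the residual sign depend only on the underlying multiset of stones, not on how they are circularly grouped.

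With the bookkeeping in place, I would fix the underlying multiset of stones (say $m_1$ of the first type, $\ldots$, $m_k$ of the $k$-th type, $N=\sum m_j > 1$) and sum over the number $i$ of circular groups. Observation \ref{o.mid} converts the sign $(-1)^{m(R')}$ into $(-1)^{i}$ up to a global factor that matches the $R'=\emptyset$ term, and the number of arrangements into $i$ disjoint, nonempty, nonperiodic, pairwise-distinct circular orders is exactly Sherman's $b_{N,i}$. The Lemma on coin arrangements then gives $\sum_{i=1}^{N}(-1)^i b_{N,i}=0$ whenever $N>1$, which cancels every contribution with a nonempty $R'$ against the others and leaves only the $R'=\emptyset$ term, namely $1$. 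Summing over all $R \in U(T,t)$ yields the proposition.

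I expect the main obstacle to be the bijection in the second paragraph, i.e.\ verifying that the stone decomposition genuinely sets up a \emph{clean} correspondence between families $R'$ of 4-circulations through $t$ and circular arrangements of a fixed stone-multiset, with no over- or under-counting. The subtle points are that the decomposition must be shown independent of the starting connector $c^1$ (already asserted after STEP~2) and that distinct circular orders of the same stone-multiset really do give distinct 4-circulations while periodic ones are correctly excluded; both hinge on Proposition \ref{p.nn} and on the feasibility property of Definition \ref{def.4cc}. Once that correspondence is established, matching the signs via Observation \ref{o.mid} and invoking the coin lemma is routine.
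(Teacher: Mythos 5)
Your overall strategy coincides with the paper's: factor out the sub-family $R$ avoiding $t$, decompose the circulations through $t$ into stones, control signs with Observation \ref{o.mid}, and cancel via the Lemma on coin arrangements. However, there is a genuine error in your identification of which terms survive. You assert that on the $U(T')$-side only $R'=\emptyset$ is allowed ``since any nonempty $R'$ would place $t$ in a connector''. This misreads the definition: $U(T')$ requires that $t$ appear \emph{at most once} in a connector of an element of $Q$, not that it never appear, so $U(T')$ contains families in which exactly one member passes through $t$, with exactly one connector there. Correspondingly, your central claim
$$
\sum_{R' \in U(T,R)} (-1)^{m(R')}\prod_{(S,C) \in R'}\prod_{a\in S}x_a = 1
$$
is false, and your own mechanism shows why: the coin-arrangements lemma gives $\sum_i(-1)^ib_{N,i}=0$ only for $N>1$, so after the stone decomposition the multisets $Z$ with $|Z|=1$ do \emph{not} cancel (there $\beta(Z)=1$). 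These surviving single-stone terms are exactly the aperiodic 4-circulations having a unique connector at $t$, and they are precisely what is needed to reproduce the nonempty contributions on the $U(T')$-side. As written, your argument would instead ``establish'' the false identity that the $U(T)$-sum equals the sum over families avoiding $t$ altogether.

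The repair is small but necessary: after applying the coin-arrangements lemma, keep both the $|Z|=0$ and the $|Z|=1$ terms on the left, and observe that the pairs $(R,Z)$ with $|Z|\le 1$ correspond bijectively to the decompositions $Q=R\cup R'$ with $|R'|\le 1$ of elements of $U(T')$ (the unique element of $R'$, if present, being a single stone). With that correction, your bookkeeping of signs via Observation \ref{o.mid} and your (justified) concerns about the stone/circular-arrangement bijection resting on Proposition \ref{p.nn} and the feasibility property match the paper's proof.
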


\begin{proof} 
We can write 
$$
\sum_{Q= \{(S_1,C_1), \ldots, (S_k,C_k)\}\in U(T)} (-1)^{m(Q)} \prod_{i=1}^k\prod_{a\in S_i}x_a=
$$
$$
\sum_{R\in U(T,t)} [(-1)^{m(R)}\prod_{(S,C)\in R}\prod_{a\in S}x_a]
[\sum_{W\in U(T,R)} [(-1)^{m(W)}\prod_{(S,C)\in W}\prod_{a\in S}x_a.
$$
Now we rearrange the second summation by first breaking $W$ into stones and for such set of stones summing over all admissible arrangements, see Proposition \ref{p.nn}. The sign is calculated according to Observation \ref{o.mid}. We get that the expression above is equal to
$$
\sum_{R\in U(T,t)} [(-1)^{m(R)}\prod_{(S,C)\in R}\prod_{a\in S}x_a]
\sum_Z \beta(Z)\prod_{s\in Z}(-1)^{m(s)}\prod_{a\in s}x_a,
$$
where $Z$ ranges aver all finite multisets of stones (see the decomposition above) such that for each $v\in T$, $R\cup Z$ has at most one connector containing $v$. Moreover
$\beta(Z)= \sum_W(-1)^{3(|W|-1)}$ where the sum is over all exhausted unordered arrangements $W$ of the stones of $Z$ into disjoint, nonempty, circularly ordered sets such that no two circular orders are the same and none are periodic. It follows from the Lemma on coin arrangements that $\beta(Z)= 0$ whenever $|Z|>1$.  If $|Z|= 1$ then $\beta(Z)= 1$.
This proves the proposition.

\end{proof}

\begin{proof}(of 4D analogue of Bass' theorem, Theorem \ref{thm.b})
We have 
$$
\sum_{Q= \{(S_1,C_1), \ldots, (S_k,C_k)\}\in U(\emptyset)} (-1)^{m(Q)} \prod_{i=1}^k\prod_{a\in S_i}x_a= \sum_{Q= \{(S_1,C_1), \ldots, (S_k,C_k)\}\in U(V)} (-1)^{m(Q)} \prod_{i=1}^k\prod_{a\in S_i}x_a
$$
due to Proposition \ref{p.n2}. The left-hand-side is equal to $4IS_D(x)$ and the right-hand-side is equal to 
$\det(I-A(D,x))$ by Corollary \ref{p.3det1}.

\end{proof}

\bibliographystyle{amsplain} 

\begin{thebibliography}{99}

\bibitem{B} H. Bass, The Ihara-Selberg zeta function of a tree lattice, Intern. J. Math. 3 (1992), 717–-797.

\bibitem{FL}  H. Teimoori-Faal, M. Loebl,  Bass' identity and a coin arrangement lemma,
Eur. J. Comb. 33(5) (2012) 736--742.

\bibitem{F} M. E. Fisher, On the Dimer Solution of Planar Ising Models, Journal of Mathematical Physics 7 (1966), 1776- 1781.

\bibitem{FZ} D. Foata, D. Zeilberger, A Combinatorial Proof of Bass’s Evaluations of the Ihara-Selberg Zeta Function for Graphs,
Trans. Amer. Math. Soc. 351 (1999), 2257–-2274.

\bibitem{KW} M. Kac, J.C. Ward, A combinatorial solution of the two-dimensional Ising model, Physical Review 88 (1952), 1332--1337.

\bibitem{K} P. W. Kasteleyn, Graph Theory and Crystal Physics, Academic Press, London (1967), 43–-110.

\bibitem{L} M. Loebl, Discrete mathematics in statistical physics (Introductory lecture notes), Series: Advanced Lectures in Mathematics, ISBN 978-3-8348-9329-1, 2010, 187 pages.

\bibitem{L1} M. Loebl, A discrete non-Pfaffian approach to the Ising problem, Dimacs series in Discrete mathematics and Theoretical computer science 63 (2004).

\bibitem{LR} M. Loebl, P. Rytir, Binary linear codes, dimers and hyperdeterminants, 10th Random Generation of Combinatorial Structures (GASCom) (2016); also arXiv:1302.1722.

\bibitem{RST} N. Robertson, P.D. Seymour, R. Thomas, Permanents, Pfaffian orientations and even directed circuits, Annals of Mathematics 150 (1999) 929--975.

\bibitem{R1} P. Rytir, Geometric representations of binary codes and computation of weight enumerators, Adv. in Appl. Math. 45 (2010), 290--301.

\bibitem{R2} P. Rytir, Geometric representations of linear codes, Advances in Mathematics 282 (2015).

\bibitem{S} S. Sherman, Combinatorial Aspects of the Ising Model of Ferromagnetism I, J. Math. Phys. 1 (1960).



\bibitem{vdW} B.L. van der Warden, Die lange Reichweite der regelmässigen Atomanordnung in Mischkristallen, Z. Phys. 118 (1941) 473.

\end{thebibliography}

\end{document}